\RequirePackage{fix-cm}
\documentclass[smallextended]{svjour3}
\smartqed

\usepackage{amsfonts}
\usepackage{amssymb}
\usepackage{amsmath}
\usepackage{amsopn}
\usepackage{graphicx}
\usepackage{epstopdf}
\usepackage{algorithm,algorithmicx}
\usepackage{algpseudocode}
\usepackage{comment}
\usepackage{color}
\usepackage{bm}
\usepackage{mathtools}
\usepackage{bbm}
\usepackage{booktabs}
\usepackage[numbers,sort&compress]{natbib}
\usepackage{lineno}
\usepackage{url}
\usepackage[hidelinks]{hyperref}

\DeclareGraphicsExtensions{.eps,.pdf,.png,.jpg}

\newcommand{\bphi}{\bm{\phi}}
\newcommand{\bu}{\bm{u}}

\newcommand{\bb}{\bm{b}}

\newcommand{\m}{\bm{m}}

\newcommand{\bv}{\bm{v}}

\newcommand{\h}{\bm{h}}
\newcommand{\md}{\,\mathrm{d}}
\newcommand\bx{\bm{x}}
\newcommand\by{\bm{y}}
\newcommand\bz{\bm{z}}
\newcommand\bw{\bm{w}}
\newcommand\bp{\bm{p}}
\newcommand\bs{\bm{s}}
\newcommand{\pt}{\partial_t}

\newcommand{\mathup}[1]{\mathrm{#1}}
\DeclarePairedDelimiter{\abs}{\lvert}{\rvert}
\providecommand{\coloneq}{\coloneqq}

\DeclareMathOperator{\diag}{diag}
\journalname{Journal of Scientific Computing}
\begin{document}
\title{Matrix-Free FFT--HSS Preconditioning for Periodic Landau--Lifshitz--Gilbert Saddle-Point Systems}
\titlerunning{FFT--HSS Preconditioning for Periodic LLG Systems}

\author{Hang Qi \and Changqing Ye \and Xiaofei Guan}
\authorrunning{H. Qi et al.}

\institute{
H. Qi \at
School of Mathematical Sciences, Key Laboratory of Intelligent Computing and Applications
(Ministry of Education), Tongji University, Shanghai 200092, China\\
\email{qihang7750@outlook.com}
\and
C. Ye \at
Department of Mathematics, The Chinese University of Hong Kong, Shatin,
Hong Kong Special Administrative Region, China\\
\email{ye\_changqing@outlook.com}
\and
X. Guan \at
School of Mathematical Sciences, Key Laboratory of Intelligent Computing and Applications
(Ministry of Education), Tongji University, Shanghai 200092, China\\
\email{guanxf@tongji.edu.cn}
}

\date{}
\maketitle

\begin{abstract}
In this paper, a matrix-free Hermitian/skew-Hermitian splitting (HSS)
preconditioner is proposed for periodic Landau--Lifshitz--Gilbert (LLG)
saddle-point systems.
The main contributions are threefold. (1) The coupled skew/constraint block
has an explicit \(4\times4\) nodal inverse and requires no local factorization.
Combining this local inverse with FFT inversion of the shifted exchange block
gives \(O(N_g\log N_g)\) work and \(O(N_g)\) temporary storage per application.
(2) We establish well-posedness and an even-step GMRES residual bound, with an
iteration estimate uniform in the mesh size and time step for a class of
coupled refinements. (3) The projected implicit Euler and projection-free
Crank--Nicolson-type midpoint discretizations generate saddle-point systems of
the same form, so the same matrix-free FFT--HSS preconditioning procedure
applies to both. They achieve first- and second-order temporal accuracy,
respectively; for quadratic-affine energies, the midpoint scheme also
preserves nodal length and satisfies an exact discrete dissipation identity.
Two- and three-dimensional experiments confirm the predicted temporal orders
and midpoint invariants. On the largest smooth tests, FFT--HSS reduces GMRES
iterations by \(35\)--\(39\%\) relative to same-grid Householder
preconditioning and yields approximately 15-fold and 3-fold speedups over
unpreconditioned GMRES in two and three dimensions, respectively. Broadband
tests retain mesh-independent iterations in the covered refinement regime at
time steps 13.3 times the linearized explicit exchange limit.
\keywords{Landau--Lifshitz--Gilbert equation \and saddle-point system \and
FFT preconditioner \and HSS splitting \and implicit midpoint method}
\subclass{65F08 \and 65F10 \and 65M12 \and 65M70}
\end{abstract}


\section{Introduction}\label{sec1}
The Landau--Lifshitz--Gilbert (LLG) equation models magnetization dynamics
\cite{LL:E:1935,R:VTV:2001,FA:NATMA:1992,C:ACME:2007,KP:SR:2006}. Numerical
schemes must handle its unit-length constraint, dissipation, nonlinearity, and
nonlocal lower-order fields. Tangent-plane schemes solve for a velocity in the
current tangent space
\cite{A:DCDS-S:2008,AJ:MMMAS:2006,AKT:PBCM:2012,GR:SJNA:2017,YCH:JCP:2021};
their convergence and coupled extensions are studied in
\cite{BPPR:IJNA:2014,AHPPRS:CMA:2014,FT:SJNA:2017}. Implicit midpoint variants
are second order and preserve nodal length without projection
\cite{BP:SJNA:2006,AKST:NM:2014,A:JSC:2016,AGS:SJNA:2021}. The stiff exchange
Laplacian imposes an \(h^2\)-scale restriction on explicit and
implicit--explicit treatments \cite{BKP:MC:2007,FHV:ANM:1997,PRS:CMA:2018},
whereas implicit exchange permits larger steps. Its benefit depends on
efficient solution of the resulting constrained systems.

Linear algebra is therefore a principal cost rather than an incidental
implementation detail. Every tangent-plane step is a nonsymmetric constrained
solve, repeated within midpoint nonlinear iterations
\cite{Elman:YU:1982}. Kraus et al.\
\cite{KrausEtAl2019} addressed this issue
using Householder reflections to construct effective preconditioners for
finite-element tangent-plane systems and proved linear convergence of
preconditioned GMRES. Related block preconditioners and finite-element
micromagnetic codes have also been developed
\cite{FCVKML:ITM:2019,AEBDS:JMMM:2013}. These works establish the value of
structure-aware solvers. For the periodic setting
considered here, they raise a distinct question: can Fourier diagonalization and the nodal
tangent constraint be treated together without assembling a global matrix or
factorizing local blocks, while retaining convergence information for the full
saddle-point system?

Periodic boundary conditions are not introduced solely for FFT
diagonalization. They arise in representative-cell simulations of bulk or
repeating magnetic media, including extended wires and films, periodic
microstructures, and spin-wave propagation, and they can eliminate artificial
outer-boundary effects that would otherwise obscure the response of the
material microstructure
\cite{BDHAS:SR:2021,ADL:JMMM:2025}. Within this application class, we consider
a uniform Cartesian grid. Periodicity diagonalizes the exchange operator by
the discrete Fourier transform \cite{GVZPG:CMAME:2017,YFC:CMAME:2024}, while
the nodal tangent constraint has a
small, explicitly invertible local structure. We use these two facts to
specialize the classical Hermitian/skew-Hermitian splitting (HSS)
\cite{BG:SJMAA:2004} to the generalized saddle-point system. The shifted
Hermitian factor is applied by FFT-based Helmholtz solves, and the coupled
skew/constraint factor has an explicit \(4\times4\) nodal inverse, so no local
factorization is required. The resulting split FFT--HSS preconditioner is
fully matrix-free and requires \(O(N_g\log N_g)\) work and \(O(N_g)\)
temporary storage per application, where \(N_g\) is the number of grid points.
A symmetric-part FFT preconditioner is retained as a comparison that isolates
the effect of the skew/constraint factor. The central contribution is this
periodic algebraic construction rather than a general-purpose micromagnetic
solver.

The analysis connects this construction to both the linear solver and the
time discretization. We establish well-posedness of the discrete saddle
system, recall the classical contraction result for the ideal
tangent-reduced HSS iteration, and prove an even-step residual estimate for
unrestarted GMRES on the full left-preconditioned saddle system. For a class of
coupled mesh--time-step refinements with uniform nodal bounds, the estimate
yields an explicit residual-tolerance iteration bound independent of the mesh
and time step. This condition governs solver complexity rather than stability;
the tested regime includes time steps beyond the linearized explicit exchange
limit. A complementary constant-state analysis characterizes the scope of the
uniform bound for the stationary splitting. An inexact time-stepping estimate
separates temporal, spatial, and algebraic errors.

The same matrix-free solver is used for a projected implicit Euler scheme and
a projection-free Crank--Nicolson-type midpoint scheme, referred to below as
the midpoint scheme; the two discretizations therefore require no separate
linear-algebra pipelines. For time-dependent or nonlinear lower-order fields,
midpoint evaluation is performed inside the Picard iteration, and a local
contraction result supplies a sufficient time-step condition for that
iteration. For quadratic-affine energies, the converged midpoint step
preserves nodal length and satisfies an exact discrete dissipation identity.

Two- and three-dimensional experiments are organized to test the accuracy,
efficiency, and parameter dependence of the complete procedure. They examine
temporal convergence, preconditioner performance for smooth and random
right-hand sides, dependence on the shift parameter, and discrete energy and
length conservation. The comparison includes an implementation of the
Householder tangent-space preconditioner of \cite{KrausEtAl2019} on the same
Fourier grid. For smooth data, split FFT--HSS gives lower Krylov iteration
counts than this reference and substantial solve-time gains over
unpreconditioned GMRES on the largest grids. The broadband experiment
contrasts fixed-step and bounded-stiffness refinement, thereby testing both
the prediction and the limitation of the uniform iteration analysis.

The remainder of the paper is organized as follows. Section~\ref{sec2}
introduces the periodic LLG model and its saddle-point discretization.
Section~\ref{sec3} derives the split FFT--HSS preconditioner and the midpoint
scheme. Section~\ref{sec4} presents the numerical experiments, and
Section~\ref{sec5} gives the conclusions and limitations.

\section{Preliminaries}\label{sec2}
This section fixes the periodic LLG model, the periodic magnetostatic operator,
and the Fourier collocation space used throughout the analysis. We then derive
the projected tangent-plane Euler step and its generalized saddle-point form.

\subsection{Model problem}\label{sec2.1}
We consider a periodic rectangular cell
\(\Omega=\prod_{\ell=1}^d(0,L_\ell)\), \(d\in\{2,3\}\), identified with
the flat torus. The magnetization field \(\bm{M}(\bx,t)\) is periodic in
each spatial coordinate.
Without loss of generality, we consider the normalized magnetization field \(\m(\bx,t) = \bm{M}(\bx,t)/M_\mathup{s}\), where \(M_\mathup{s}\) is the saturation magnetization.
The dynamics of the magnetization \(\m(\bx,t)\) can be described by the following normalized LLG equation
\begin{equation}\label{eq:main LLG}
    \pt\m = -\m \times \h_{\mathup{eff}} - \alpha \m \times \left(\m\times \h_{\mathup{eff}}\right)
\end{equation}
where the damping constant satisfies \(\alpha>0\), and the normalized effective
field \(\h_{\mathup{eff}}\) is given by the variational derivative of the
normalized micromagnetic free energy \(g(\m)\). We prescribe periodic initial
data \(\m(\cdot,0)=\m_0\) with \(|\m_0|=1\).
The LLG equation \eqref{eq:main LLG} can be equivalently written in another form
\begin{equation}\label{tangent plane form}
    \alpha\pt\m+\m\times\pt\m=(1+\alpha^2)\left[\h_{\mathrm{eff}}-(\m\cdot\h_{\mathrm{eff}})\m\right].
\end{equation}
Indeed, the LLG flow preserves \(|\m|=1\). Using this unit-length identity and
\(\bm{a} \times (\bm{b} \times \bm{c}) = (\bm{a} \cdot \bm{c})\bm{b}
- (\bm{a} \cdot \bm{b})\bm{c}\), we have
\(-\m \times (\m \times \h_{\mathrm{eff}} )
= \h_{\mathrm{eff}} - (\m \cdot \h_{\mathrm{eff}} )\m\).
Taking the vector product of \eqref{eq:main LLG} with \(\m\) and adding
\(\alpha\) times \eqref{eq:main LLG} then yields
\eqref{tangent plane form}.

The free energy \(g(\m)\) consists of normalized exchange, demagnetization, anisotropy, and Zeeman energy, written as
\begin{equation*}
    g(\m) = \int_{\Omega} \frac{C_{\mathup{exc}}}{\mu_0 M_\mathup{s}^2} \abs{\nabla\m}^2 - \frac{1}{2} \m \cdot \h_{\mathup{dem}}[\m] + \frac{K_u}{\mu_0 M_\mathup{s}^2} \big(1 - (\bm{e}_{\mathup{an}} \cdot \m)^2\big) - \m \cdot \h_{\mathup{ext}} \md V.
\end{equation*}
Here, \(C_{\mathup{exc}}\) represents the exchange stiffness constant, \(\mu_0\) denotes the magnetic permeability of vacuum, \(\h_{\mathup{dem}}[m]\) is the demagnetization field given by the magnetostatic Maxwell equations, \(K_u\) is the uniaxial anisotropy constant, \(\bm{e}_{\mathup{an}}\) is the unit vector of the easy axis, and \(\h_{\mathup{ext}}\) is the external magnetic field. 
Note that $\m\cdot \h_{\mathup{dem}}[\m]$ is a quadratic form.
Consequently, the effective field \(\h_{\mathup{eff}}\) can be represented as
\begin{align}\label{heff}
    \h_{\mathup{eff}}=-\frac{\delta g(\m)}{\delta \m}=\frac{2C_{\mathup{exc}}}{\mu_{0}M_{s}^{2}}\Delta \m + \h_{\mathup{dem}}[\m]+\frac{2K_{u}}{\mu_{0}M_{s}^{2}}\bm{e}_{\mathup{an}}(\m\cdot \bm{e}_{\mathup{an}})+\h_{\mathup{ext}}.
\end{align}
For simplicity, we introduce the following notations
\begin{align*}
    \begin{aligned}
        \h_{\mathup{low}}[\m] & \coloneqq  \h_{\mathup{dem}}[\m]+\h_{\mathup{ani}}[\m]+\h_{\mathup{ext}}\\
        & = \h_{\mathup{dem}}[\m] + \frac{2K_{u}}{\mu_{0}M_{s}^{2}}\bm{e}_{\mathup{an}}(\m\cdot \bm{e}_{\mathup{an}})+\h_{\mathup{ext}}.
    \end{aligned}
\end{align*}
We will detail the governing equation of the demagnetization field and the boundary conditions in Section \ref{sec2.4}.
The pointwise constraint \(\abs{\m}=1\) for all \((\bx,t)\) is a defining
property of the LLG equation and must be respected by the time discretization.

\subsection{The magnetostatic problem}\label{sec2.4}
In this subsection, we focus on the demagnetization field and the associated magnetostatic problem. As the external magnetic field and the easy-axis direction are usually given, the remaining contributions to \(\h_{\mathup{low}}\) are prescribed.

In magnetostatics, the magnetic field is treated as quasi-static and generated solely by the magnetization of the material, with external currents neglected. 
In this case, the demagnetization field \(\bm{H}_{\mathup{dem}}\) arises from the magnetization \(\bm{M}(\bx,t)\) and satisfies the magnetostatic Maxwell equations
\begin{equation*}
    \begin{cases}
        \operatorname{div} \bm{B} = 0, & \text{(Gauss's law)}, \\
        \operatorname{curl} \bm{H}_{\mathup{dem}} = 0, & \text{(Ampere's law)},
    \end{cases}
\end{equation*}
where \(\bm{B} = \mu_0(\bm{M}+\bm{H}_{\mathup{dem}})\) is the magnetic flux density and \(\mu_0\) here denotes the permeability of the vacuum. 

Since \(\operatorname{curl} \bm{H}_{\mathup{dem}} = 0\), there is a scalar potential \(\varphi(\bx)\) such that \(\bm{H}_{\mathup{dem}} = -\nabla\varphi\). Substituting \(B = \mu_0(\bm{M}-\nabla\varphi)\) into Gauss's law, we obtain the following partial differential equation
\begin{equation*}
    \Delta \varphi=\operatorname{div} \bm{M},\qquad \text{in } \mathbb{R}^3.
\end{equation*}
Similarly, for the normalized magnetization \(\m\) and the corresponding demagnetizing field \(\h_{\mathup{dem}}\), the governing equation is given by
\begin{equation}\label{hdem}
    \left\{\begin{aligned}
        &\h_{\mathup{dem}}(\bx) = -\nabla\phi(\bx), \\
        & \Delta \phi =\operatorname{div}\m.
    \end{aligned}
    \right.
\end{equation}
where \(\phi(\bx)\) denotes the scalar potential in the normalized variables.

For an isolated sample, \eqref{hdem} is instead posed in free space with
\(\phi = \mathcal{O}(1/\abs{\bm{r}})\) as \(\bm{r}\to\infty\), and it can
be represented by
\begin{equation*}
    \h_{\mathup{dem}} = -\nabla \int_{\Omega} \nabla N(\bx - \bm{y}) \cdot \m(\bm{y}) \, \mathrm{d}\bm{y},
\end{equation*}
where \(N(\bx) = -1/(4\pi|\bx|)\) denotes the Newtonian potential.

The Maxwell formulation above is used only for the three-dimensional
periodic demagnetizing field in the micromagnetic test problem. The two-dimensional
studies below use manufactured forcing and do not claim a thin-film
stray-field model. The present work does not approximate that free-space
problem. We impose
periodic boundary conditions on both \(\m\) and \(\phi\), together with the
zero-mean convention for \(\phi\) \cite{BDHAS:SR:2021,ADL:JMMM:2025}.
The periodic Poisson operator is diagonal in Fourier space. Open-boundary FFT
convolutions generally use zero padding and a different discrete kernel; they
are therefore outside the operator and cost model analyzed below. Periodic
cells are appropriate for bulk or representative-cell calculations, but not
for arbitrary finite devices.

\subsection{Periodic spectral discretization}\label{sec2.2}
Let \(\mathcal G_N\) be a uniform periodic Cartesian grid with \(N_\ell\)
points in direction \(\ell\), and let \(N_g=\prod_{\ell=1}^dN_\ell\).
We denote the corresponding trigonometric-polynomial space by \(V_N\) and
write
\[
 (u,v)_N=\frac{|\Omega|}{N_g}\sum_{\bx_j\in\mathcal G_N}
 u(\bx_j)\cdot v(\bx_j)
\]
for the discrete inner product. Fourier differentiation defines
\(\nabla_N\) and \(\Delta_N\). For a Fourier coefficient with wave vector
\(\bm{\xi}\), the positive operator \(-\Delta_N\) has symbol
\(|\bm{\xi}|^2\), with the physical scaling contained in
\(\bm{\xi}_\ell=2\pi k_\ell/L_\ell\).

The admissible nodal magnetizations and the discrete tangent space are
\[
 \mathcal M_N=\{\bphi_N\in V_N^3:\ |\bphi_N(\bx_j)|=1
 \ \text{for every }\bx_j\in\mathcal G_N\},
\]
\begin{equation}\label{mod-tangent}
 \mathcal K_N(\m_N)=
 \{\bphi_N\in V_N^3:\ \m_N(\bx_j)\cdot\bphi_N(\bx_j)=0
 \ \text{for every }\bx_j\in\mathcal G_N\}.
\end{equation}
Thus the tangent constraint is imposed pointwise at grid nodes. For
compatibility with the variational notation below, we set
\(H_h^1=V_N^3\), \(Q_h=V_N\), and use the subscript \(h\) for grid
functions when no confusion can arise. The essential computational point is
that the exchange block is diagonal in Fourier space, whereas multiplication
by \(\m_N\) is local in physical space.

\subsection{Projected implicit Euler scheme}\label{sec_euler}
We first discretize in time by a projected implicit Euler tangent-plane
scheme. Let
\(0=t_0<t_1<\dots<t_{N_t}=T\) be a uniform partition with
\(k=t_{i+1}-t_i\), and write
\(\bm g^i=\bm g(t_i)\) for any time-dependent grid function \(\bm g\).

Following the general approach of \cite{A:DCDS-S:2008,AJ:MMMAS:2006} with the modified discrete tangent space \eqref{mod-tangent}, we discretize \eqref{tangent plane form} by determining the approximated time derivative \(\bv^{n+1}_h\approx\pt\m^{n+1}_h \in \mathcal{K}_h(\m^n_h)\) at the \(n\)-th time step such that
\begin{equation}\label{total system}
    \begin{aligned}
        &\quad \underbrace{\alpha(\bv_{h}^{n+1},\bphi_{h})_{h}}_{\Pi_1}+\underbrace{(\m_{h}^{n}\times\bv_{h}^{n+1},\bphi_{h})_{h}}_{\Pi_2}+\underbrace{k\ \widehat{C}_{\mathup{exc}} (\nabla\bv_h^{n+1},\nabla\bphi_h)}_{\Pi_3}\\
        &=\underbrace{-\widehat{C}_{\mathup{exc}}(\nabla\m_{h}^{n},\nabla\bphi_{h})+(1+\alpha^2)(\h_{\mathup{low}}(\m^n_h),\bphi_h)_h}_{\Pi_4}, \quad \forall \bphi_h\in \mathcal{K}_h(\m_h^n)
    \end{aligned}  
\end{equation}
where \(\widehat{C}_{\mathup{exc}}\coloneqq 2C_{\mathup{exc}}(1+\alpha^2)/(\mu_{0}M_{s}^{2})\). 

\begin{remark}
The exchange Laplacian is the stiff component of
\eqref{tangent plane form} \cite{PRS:CMA:2018}. Equation~\eqref{total system}
therefore treats exchange implicitly and the lower-order field explicitly,
producing a linearly implicit first-order tangent-plane step. This choice
reduces each time step to one linear saddle solve; no unconditional stability
claim for the explicit lower-order treatment is used below.
\end{remark}

As both the unknown and the test functions in \eqref{total system} are constrained to the tangent space, it is natural to formulate the problem as a saddle-point system: Find \((\bv^{n+1}_h,r_h)\in H_h^1\times Q_h\), such that
\begin{equation}\label{total system 2}
    \left\{
    \begin{aligned}
        &\alpha(\bv_{h}^{n+1},\bphi_{h})_{h}
        +(\m_{h}^{n}\times\bv_{h}^{n+1},\bphi_{h})_{h}\\
        &\quad+k\widehat{C}_{\mathup{exc}}
        (\nabla\bv_h^{n+1},\nabla\bphi_h)
        +(\m_h^n\cdot\bphi_h,r_h)_h\\
        &\quad=-\widehat{C}_{\mathup{exc}}
        (\nabla\m_{h}^{n},\nabla\bphi_{h})
        +(1+\alpha^2)(\h_{\mathup{low}}(\m^n_h),\bphi_h)_h,\\
        &(\m_h^n\cdot\bv^{n+1}_h,q_h)_h=0. 
    \end{aligned}
    \right. 
\end{equation}
Equation~\eqref{total system 2} holds for all
\((\bm{\phi}_h,q_h)\in H_h^1\times Q_h\).
An equivalent matrix formulation is given as
\begin{equation}\label{saddle point system}
    \underbrace{\left[\begin{aligned}
        &A\quad B^T\\
        -&B\quad O
    \end{aligned}
    \right]}_{\mathcal{A}}\left[\begin{aligned}
        &\bv\\
        &\bm{r}
    \end{aligned}
    \right]
    =\underbrace{\left[\begin{aligned}
        &\bm{f}\\
        &\bm{0}
    \end{aligned}
    \right]}_{\bm{b}}.
\end{equation}
Here, the matrix \(A\) comprises the sum \(\Pi_1 + \Pi_2 + \Pi_3\) and the vector \(\bm{f}\) coincides with \(\Pi_4\), with each \(\Pi_i\) (\(i=1, \dots, 4\)) as defined in \eqref{total system}. 
Additionally, \(B\) is a block diagonal matrix
\begin{equation*}
    B=\begin{bmatrix}
      B_1 &         &        & \\
             & B_2  &        & \\
             &         & \ddots & \\
             &         &        & B_N 
  \end{bmatrix},
\end{equation*}
where the block \(B_i\) is a \(1\times 3\) matrix
\begin{equation}\label{Bi}
    B_i = [m_{i;1},m_{i;2},m_{i;3}],
\end{equation}
where \(m_{i;j}\) denotes the \(j\)-th component of the magnetization at grid
point \(i\). The constant quadrature weight has been divided out of both
block rows.

\begin{remark}
Directly from \eqref{total system 2}, the \((2,1)\)-block of the coefficient matrix \(\mathcal{A}\) for the resulting saddle-point system is \(B\).
However, to align the algebraic form with the splitting iteration framework in \cite{BG:SJMAA:2004}, we multiply the second block row of the system by
\(-1\). 
Consequently, the \((2,1)\)-block in \eqref{saddle point system} is written as \(-B\). 
Since the corresponding constraint equation is homogeneous, this row scaling does not change the feasible set or the solution of the saddle-point system.
\end{remark}

\begin{proposition}[Well-posedness of each linearized saddle system]
\label{prop:saddle-wellposed}
Let \(H=H^T\succ0\), \(S^T=-S\), and suppose every nodal
magnetization is nonzero. Then \(B\) in \eqref{Bi} has full row rank and
\begin{equation}\label{eq:B-singular-values}
 BB^T=\diag\bigl(\|\m_1\|_2^2,\ldots,\|\m_{N_g}\|_2^2\bigr),\qquad
 \sigma_{\min}(B)=\min_i\|\m_i\|_2 .
\end{equation}
In particular, the discrete constraint has inf-sup constant one for nodal
unit magnetizations. The matrix
\[
 \mathcal A=
 \begin{bmatrix}H+S&B^T\\-B&0\end{bmatrix}
\]
is nonsingular, as in the standard generalized saddle-point argument
\cite{BG:SJMAA:2004}. Moreover, for every \(\lambda>0\), both shifted factors
\(\mathcal H+\lambda\mathcal I\) and
\(\mathcal S+\lambda\mathcal I\) in \eqref{HS} are nonsingular.
\end{proposition}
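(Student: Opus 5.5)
The argument has three parts: the structure of \(B\), nonsingularity of \(\mathcal A\) by an energy argument, and nonsingularity of the shifted factors. Since \eqref{HS} appears only later, I take it to mean the standard HSS splitting of \cite{BG:SJMAA:2004},
\[
 \mathcal H=\begin{bmatrix}H&0\\0&0\end{bmatrix},\qquad
 \mathcal S=\begin{bmatrix}S&B^T\\-B&0\end{bmatrix},
\]
so that \(\mathcal A=\mathcal H+\mathcal S\), with \(\mathcal H\) symmetric positive semidefinite and \(\mathcal S\) skew-symmetric. The plan rests only on these two properties.

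\textbf{Structure of \(B\).} \(B\) is block diagonal with \(1\times3\) blocks \(B_i=\m_i^T\). Hence \(BB^T\) is diagonal with entries \(B_iB_i^T=\|\m_i\|_2^2\), which gives the first identity in \eqref{eq:B-singular-values}. These entries are positive because every \(\m_i\neq0\), so \(BB^T\succ0\) and \(B\) has full row rank. The singular values of \(B\) are the square roots of the eigenvalues of \(BB^T\), so \(\sigma_{\min}(B)=\min_i\|\m_i\|_2\). For nodal unit vectors this equals one. In the discrete inner product the quadrature weight appears on both sides and cancels, so the inf-sup constant is one.

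\textbf{Nonsingularity of \(\mathcal A\).} Suppose \(\mathcal A(\bv;\bm r)=0\). Take the inner product of the first block row with \(\bv\) and of the second with \(\bm r\), and add. By skew-symmetry, \(\bv^TS\bv=0\), and the coupling terms cancel because \(\bv^TB^T\bm r-\bm r^TB\bv=0\). What remains is \(\bv^TH\bv=0\). Since \(H\succ0\), this forces \(\bv=0\). The first row then reduces to \(B^T\bm r=0\), and full row rank of \(B\) gives \(\bm r=0\).

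\textbf{Shifted factors.} For any real \(\lambda>0\):
\begin{itemize}
 \item \(\mathcal H+\lambda\mathcal I\) is block diagonal with blocks \(H+\lambda I\succ0\) and \(\lambda I\), so it is symmetric positive definite.
 \item \(\mathcal S\) is real skew-symmetric, so its eigenvalues are purely imaginary. Hence \(-\lambda\) is not an eigenvalue and \(\mathcal S+\lambda\mathcal I\) is nonsingular. Equivalently, \(x^T(\mathcal S+\lambda\mathcal I)x=\lambda\|x\|^2>0\) for every \(x\neq0\).
\end{itemize}

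The only delicate point is confirming that \eqref{HS} really places \(B^T\) and \(-B\) inside \(\mathcal S\), which is what makes \(\mathcal S\) skew. This is exactly why the second block row was multiplied by \(-1\) in the preceding remark. If the paper's \(\mathcal S\) is instead the tangent-reduced operator, the same skew-symmetry argument applies verbatim.
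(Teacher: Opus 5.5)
Your proposal is correct and matches the paper's proof step for step: the block identity for \(BB^T\), the energy test that reduces the homogeneous system to \(\bv^TH\bv=0\) and then invokes the full row rank of \(B\), and skew-symmetry for the shifted factors. The only cosmetic difference is that the paper shows \(\mathcal S+\lambda\mathcal I\) is nonsingular via \(\|(\lambda\mathcal I+\mathcal S)\bx\|_2^2=\lambda^2\|\bx\|_2^2+\|\mathcal S\bx\|_2^2\), which also yields the singular-value lower bound \(\lambda\), whereas you use \(\bx^T(\mathcal S+\lambda\mathcal I)\bx=\lambda\|\bx\|_2^2\); both rest on the same skew-symmetry.
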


\begin{proof}
The block identity gives \eqref{eq:B-singular-values}. Testing the homogeneous
first equation by \(\bv\), with \(B\bv=0\), gives
\(\bv^TH\bv=0\), hence \(\bv=0\) and then \(\bm r=0\).
The shifted-factor claim follows from positive definiteness of
\(\mathcal H+\lambda\mathcal I\) and
\(\|(\lambda\mathcal I+\mathcal S)\bx\|_2^2
=\lambda^2\|\bx\|_2^2+\|\mathcal S\bx\|_2^2\).
\end{proof}

\section{Numerical method and convergence analysis}\label{sec3}
The construction begins with the Hermitian and skew-Hermitian parts of the
generalized saddle matrix. The Fourier symbol of the Hermitian block and the
exact local inverse of the coupled skew/constraint block then define the
matrix-free preconditioner. The same linear solver is incorporated into the
projection-free midpoint scheme. The analysis treats
well-posedness of the saddle system, convergence of the tangent-reduced HSS
iteration, GMRES convergence for the full saddle-point system, and the effect
of inexact algebraic solves on the time discretizations.

\subsection{HSS iteration}\label{sec3.1}
The HSS iteration for the generalized saddle-point system
\(\mathcal{A}\bx=\bb\) was introduced in \cite{BG:SJMAA:2004}. Write
\(\mathcal{A}=\mathcal{H}+\mathcal{S}\), where
\[
\mathcal{H} = \frac{1}{2} (\mathcal{A} + \mathcal{A}^T), \quad \mathcal{S} = \frac{1}{2} (\mathcal{A} - \mathcal{A}^T)
\]
For \(\lambda>0\), the two equivalent splittings are
\[
\mathcal{A} = (\mathcal{H} + \lambda \mathcal{I}) - (\lambda \mathcal{I} - \mathcal{S}) \text{ and } \mathcal{A} = (\mathcal{S} + \lambda \mathcal{I}) - (\lambda \mathcal{I} - \mathcal{H})
\]
where \(\mathcal{I}\) is the identity matrix. Given an initial guess
\(\bx^{(0)}\), the HSS iteration is
\begin{align}\label{stationary iteration}
    \begin{cases} 
        (\mathcal{H} + \lambda \mathcal{I}) \bx^{(k + \frac{1}{2})} = (\lambda \mathcal{I} - \mathcal{S}) \bm{x}^{(k)} + \bb, \\
        (\mathcal{S} + \lambda \mathcal{I}) \bx^{(k + 1)} = (\lambda \mathcal{I} - \mathcal{H}) \bx^{(k + \frac{1}{2})} + \bb.
    \end{cases}
\end{align}

For the saddle-point system \eqref{saddle point system}, the two parts are
\begin{align}\label{HS}
    \mathcal{H} = \begin{bmatrix}
    H & O \\
    O & O
    \end{bmatrix} \quad \text{and} \quad \mathcal{S} = \begin{bmatrix}
    S & B^T \\
    -B & O
    \end{bmatrix},
\end{align}
where $H=\Pi_1+\Pi_3$, $S=\Pi_2$. The first half-step of \eqref{stationary iteration} relates to two (uncoupled) linear systems of the form
\begin{align}\label{first half-step}
    \begin{cases}
    (H + \lambda I_{3N}) \bv^{(k + \frac{1}{2})} = \lambda \bv^{(k)} - S\bv^{(k)} + \bm{f} - B^T \bm{r}^{(k)}, \\
    \lambda \bm{r}^{(k + \frac{1}{2})} = \lambda \bm{r}^{(k)} + B \bv^{(k)}.
    \end{cases}
\end{align}
The first equation in \eqref{first half-step} is a shifted periodic Helmholtz
problem and is diagonal in Fourier space.

The second half-step of \eqref{stationary iteration} is
\begin{equation}\label{sh-si}
    (\mathcal{S} + \lambda \mathcal{I}) \bx^{(k + 1)} = (\lambda \mathcal{I} - \mathcal{H}) \bx^{(k + \frac{1}{2})} + \bb.
\end{equation}
Introduce the reordered vector
\[
\tilde{\bm{x}}^{(k)}=[v_{1;1}^{(k)},v_{1;2}^{(k)},v_{1;3}^{(k)},r^{(k)}_1,\cdots,v_{N_g;1}^{(k)},v_{N_g;2}^{(k)},v_{N_g;3}^{(k)},r^{(k)}_{N_g}]^T
\]
which groups the three velocity components and the Lagrange multiplier at
each grid point. Under this ordering, \eqref{sh-si} becomes
\[
(\tilde{\mathcal{S}} + \lambda \mathcal{I}) \tilde{\bx}^{(k + 1)} = (\lambda \mathcal{I} - \tilde{\mathcal{H}}) \tilde{\bx}^{(k + \frac{1}{2})} + \tilde{\bb},
\]
where \(\tilde S\) is block diagonal with nodal blocks
\begin{equation}\label{S}
    \tilde{S}_i=
    \left(
    \begin{aligned}
        &\Lambda_i &B_i^T\\
        -&B_i & O
    \end{aligned}
    \right)
\end{equation}
where \(\Lambda_i\) is defined as
\begin{equation}\label{Lambda}
    \Lambda_i=\begin{bmatrix}
        0 & -m_{i;3} & m_{i;2} \\
        m_{i;3} & 0 & -m_{i;1} \\
        -m_{i;2} & m_{i;1} & 0
    \end{bmatrix}
\end{equation}
Thus applying \((\mathcal S+\lambda\mathcal I)^{-1}\) requires one
independent \(4\times4\) solve per grid point. The inverse is available in
closed form, so no local factorization is required.

\subsection{FFT--HSS preconditioning of GMRES}\label{sec3.2}
We use the two shifted factors to precondition GMRES rather than use
\eqref{stationary iteration} as the outer solver. This distinction matters:
the reduced stationary optimum derived below need not minimize iterations or
time for the full saddle system.
We use
\(\mathcal{M}_\lambda
=\mathcal{M}_{\lambda}^{(1)}\mathcal{M}_{\lambda}^{(2)}
\coloneq(\mathcal{H}+\lambda\mathcal{I})
(\mathcal{S}+\lambda\mathcal{I})\)
as a left preconditioner. Its application requires the successive solves
\((\mathcal{H}+\lambda\mathcal{I})\bm w=\bm p\) and
\((\mathcal{S}+\lambda\mathcal{I})\bz=\bm w\).

The first solve uses the inverse of
\(\mathcal{M}_{\lambda}^{(1)}
=\mathcal{H}+\lambda\mathcal I\).
Recalling \eqref{total system} and \eqref{HS}, the Hermitian block contains
only the identity and the periodic Laplacian. Parseval's identity gives
\[
 (-\Delta_N\bv_h,\bphi_h)_N
 =\sum_{\bm{\xi}\in\mathcal F_N}
 |\bm{\xi}|^2\widehat{\bv}_h[\bm{\xi}]
 \cdot\overline{\widehat{\bphi}_h[\bm{\xi}]} .
\]
Thus the Laplacian is diagonal in Fourier space. After division by the
constant grid quadrature weight, the zeroth-order block is the identity, so
\(\mathcal{H}+\lambda\mathcal I\) is inverted mode by mode.
Algorithm~\ref{fh_pre} summarizes its matrix-free application.

\begin{algorithm}[!ht]
    \caption{Matrix-free solution of
    \(\mathcal{M}_{\lambda}^{(1)}\bm{w}=\bm{p}\).}
    \label{fh_pre}
    \begin{algorithmic}[1]
        \Require \(\bm{p}=(\bm{v},\bm{r})\), \(\alpha\), \(\lambda\),
        \(\widehat{C}_{\mathup{exc}}\), \(k\), and
        \(\theta\in\{1,\frac12\}\);
        \Ensure \((\mathcal{H}+\lambda \mathcal{I})^{-1}\bm{p}\);
 
        \State \(\widehat{\bm q}_1=\operatorname{FFT}(\bm{v})\);
        \State
        \(\widehat{\bm q}_2[\bm\xi]
        =\widehat{\bm q}_1[\bm\xi]/
        (\alpha+\theta k\widehat{C}_{\mathup{exc}}|\bm{\xi}|^2+\lambda)\);
        \State \(\bm{q}_3=\operatorname{iFFT}(\widehat{\bm{q}}_2)\);
        \State \(\bm{q}_4=\bm{r}/\lambda\);
        \State \Return \((\mathcal{H}+\lambda\mathcal{I})^{-1}\bm{p}=(\bm{q}_3,\bm{q}_4)\).
 \end{algorithmic}
\end{algorithm}
For the preconditioner \(\mathcal{M}_\lambda^{(2)}\), it suffices to invert the
independent blocks \(\tilde S_i+\lambda I\). Direct multiplication gives
\begin{equation*}
    \tilde{S}_i^2=-(m_{i;1}^2+m_{i;2}^2+m_{i;3}^2)I
\end{equation*}
and, because \(\tilde S_i^T=-\tilde S_i\),
\begin{equation}\label{eq:local-scaled-orthogonal}
 (\lambda I+\tilde S_i)^T(\lambda I+\tilde S_i)
 =(\lambda^2+\|\m_i\|_2^2)I.
\end{equation}
Thus every shifted local factor is a scaled orthogonal matrix:
\begin{equation}\label{eq:local-conditioning}
 \kappa_2(\lambda I+\tilde S_i)=1,\qquad
 \|(\lambda I+\tilde S_i)^{-1}\|_2
 =(\lambda^2+\|\m_i\|_2^2)^{-1/2}.
\end{equation}
In particular,
\begin{equation*}
    (\lambda I+\tilde{S}_i)(\lambda I-\tilde{S}_i)
    =(\lambda^2+\|\m_i\|_2^2)I
\end{equation*}
and hence
\begin{equation*}
    (\lambda I+\tilde{S}_i)^{-1}
    =\frac{\lambda I-\tilde{S}_i}{\lambda^2+\|\m_i\|_2^2}
\end{equation*}
Let \(p_{i;j}=m_{i;j}/\lambda\), the inverse of each block can be explicitly expressed as
\begin{equation}\label{inverse S}
    (\tilde{S}_i+\lambda I)^{-1}=\frac{1}{\lambda(p_{i;1}^2 + p_{i;2}^2 + p_{i;3}^2 + 1)}\begin{bmatrix}
        1 &  p_{i;3} & -p_{i;2} & -p_{i;1} \\
        -p_{i;3} & 1 & p_{i;1} & -p_{i;2}\\
        p_{i;2} & -p_{i;1} & 1 & -p_{i;3}\\
        p_{i;1} & p_{i;2} & p_{i;3} & 1
    \end{bmatrix}
\end{equation}

The forward action is also matrix free: use the Fourier symbols in
Algorithm~\ref{fh_pre} without inversion and the nodal formulas
\eqref{S}--\eqref{Lambda}. At each Euler step, GMRES solves the saddle system
with the two factors above, followed by
\(\widehat\m_h^{n+1}=\m_h^n+k\bv_h^{n+1}\) and nodal normalization.
The projection enforces unit length and changes the tangent update only by
\(O(k^2)\), so the method is first order but has no exact midpoint energy law.

This distinction motivates a midpoint discretization that preserves the
constraint without projection and admits a discrete dissipation law.

\subsection{Projection-free Crank--Nicolson-type midpoint scheme}\label{sec3.3}
To maintain geometric consistency while eliminating explicit projection, we
use a fully implicit midpoint discretization. This is the
Crank--Nicolson-type branch of the formulation and is called the midpoint
scheme below. The lower-order field is evaluated at the same midpoint.
Freezing a nonlinear lower-order field at \(\m_h^n\), or evaluating a
time-dependent forcing at \(t_n\), would generally reduce the temporal order
to one.

With \(\bv^{n+1/2}=(\m^{n+1}-\m^n)/k\) and
\(\m^{n+1/2}=(\m^{n+1}+\m^n)/2\), the midpoint tangent equation has the
saddle formulation: find
\((\bv^{n+1/2}_h,r_h)\in H_h^1\times Q_h\) such that
\begin{equation}\label{C-N}
    \left\{\begin{aligned}
         &\alpha(\bv_h^{n+\frac{1}{2}},\bphi_h)_h
         +(\m_h^{n+\frac{1}{2}}\times\bv_h^{n+\frac{1}{2}},\bphi_h)_h\\
         &\quad+\widehat{C}_{\mathup{exc}}
         (\nabla\m_h^{n+\frac{1}{2}},\nabla \bphi_h)\\
         &\quad +(\m_h^{n+\frac{1}{2}}\cdot\bphi_h,r_h)_h\\
         &\quad =(1+\alpha^2)
         (\h_{\mathup{low}}(\m_h^{n+\frac{1}{2}},t_{n+\frac{1}{2}}),
         \bphi_h)_h,\\
         &\qquad \forall (\bphi_h,q_h) \in H_h^1\times Q_h,\\
         &(\bv^{n+\frac{1}{2}}_h\cdot\m_h^{n+\frac{1}{2}},q_h)_h=0.
    \end{aligned}
    \right.
\end{equation}
Substituting
\(\m_h^{n+\frac12}=\m_h^n+\frac{k}{2}\bv_h^{n+\frac12}\)
shows that the exchange contribution to the velocity block is
\(\frac{k}{2}\widehat C_{\mathup{exc}}(-\Delta_N)\). The cross product,
tangent constraint, and lower-order field still depend on the unknown
midpoint. At Picard step \(p\), we freeze these quantities at
\(\widehat{\m}_h^{n+1/2,p}\), retain the split exchange term
\(\widehat C_{\rm exc}(\nabla\m_h^n,\nabla\bphi_h)
+(k/2)\widehat C_{\rm exc}(\nabla\bv_h^p,\nabla\bphi_h)\), and solve the
resulting linear saddle system, where
\begin{equation}\label{hatm}
    \widehat{\m}_h^{n+\frac{1}{2},p}\coloneqq\begin{cases}
        \m_h^n+(k/2)\bv_h^{n-\frac{1}{2}},
        &p=0,\\
        \m_h^n+(k/2)\bv_h^{n+\frac{1}{2},p-1},
        &p\ge1,
    \end{cases}.
\end{equation}
For the first time step we set \(\bv_h^{-1/2}=0\); this choice affects only
the nonlinear initial guess, not the converged midpoint solution.

\begin{proposition}[Local convergence of the midpoint Picard iteration]
\label{prop:picard-convergence}
Fix \(N\) and let \(\mathcal V_N(\bw)\) denote the velocity obtained by solving the
linear saddle system with frozen midpoint \(\bw\). Let
\[
 G_{k,n}(\bw)=\m_h^n+\frac{k}{2}\mathcal V_N(\bw).
\]
Suppose a closed ball \(D\subset V_N^3\) satisfies
\(\min_{\bw\in D,j}|\bw(\bx_j)|\ge m_->0\), the lower-order field is
continuously differentiable on \(D\), and \(G_{k,n}(D)\subset D\). Then
\(\mathcal V_N\) is Lipschitz on \(D\): for some finite \(L_{V,N}\),
\[
 \|\mathcal V_N(\bw)-\mathcal V_N(\bz)\|_N
 \le L_{V,N}\|\bw-\bz\|_N,\qquad \bw,\bz\in D.
\]
If \(q_{\rm P}:=kL_{V,N}/2<1\), the Picard iteration
\(\widehat\m^{p+1}=G_{k,n}(\widehat\m^p)\) has a unique fixed point in
\(D\) and
\begin{equation}\label{eq:picard-rate}
\|\widehat\m^p-\m_h^{n+\frac12}\|_N
 \le\frac{q_{\rm P}^p}{1-q_{\rm P}}
 \|\widehat\m^1-\widehat\m^0\|_N .
\end{equation}
\end{proposition}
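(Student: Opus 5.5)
The proof has two parts: first show that \(\mathcal V_N\) is Lipschitz on \(D\), then apply the Banach fixed-point theorem to \(G_{k,n}\).

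\textbf{Writing \(\mathcal V_N\) as a smooth solution map.} Fix the linear saddle system with frozen midpoint \(\bw\) as
\[
\mathcal A(\bw)\begin{bmatrix}\bv\\ \bm r\end{bmatrix}=\begin{bmatrix}\bm f(\bw)\\ \bm 0\end{bmatrix},
\qquad
\mathcal A(\bw)=\begin{bmatrix}H+S(\bw)&B(\bw)^T\\-B(\bw)&0\end{bmatrix}.
\]
The pieces depend on \(\bw\) as follows.
\begin{itemize}
\item \(H=\alpha I+\tfrac{k}{2}\widehat C_{\rm exc}(-\Delta_N)\) is independent of \(\bw\).
\item \(S(\bw)\) is the nodal cross-product matrix \(\Lambda\) built from \(\bw\), so it is linear in \(\bw\).
\item \(B(\bw)\) has nodal rows \(\bw(\bx_j)^T\), so it is also linear in \(\bw\).
\item The right-hand side is \(\bm f(\bw)=-\widehat C_{\rm exc}(-\Delta_N)\m_h^n+(1+\alpha^2)\h_{\rm low}(\bw,t_{n+1/2})\), which is \(C^1\) on \(D\) by hypothesis.
\end{itemize}

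Since \(|\bw(\bx_j)|\ge m_->0\) on \(D\), Proposition~\ref{prop:saddle-wellposed} shows that \(\mathcal A(\bw)\) is nonsingular for every \(\bw\in D\). Matrix entries are polynomial in \(\bw\), and inversion is smooth on nonsingular matrices. Hence \(\bw\mapsto\mathcal A(\bw)^{-1}\) is \(C^1\) on an open neighbourhood of \(D\) in the finite-dimensional space \(V_N^3\); here \(N\) is fixed. Therefore \(\mathcal V_N(\bw)=[I\ 0]\,\mathcal A(\bw)^{-1}[\bm f(\bw);\bm 0]\) is \(C^1\) on \(D\).

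\textbf{Lipschitz bound.} The ball \(D\) is closed and bounded in finite dimensions, hence compact. The derivative \(D\mathcal V_N\) is continuous on \(D\), so \(L_{V,N}:=\max_{D}\|D\mathcal V_N\|\) is finite, with the norm induced by \(\|\cdot\|_N\). Because \(D\) is convex, the mean value inequality along segments \([\bz,\bw]\subset D\) gives the Lipschitz estimate. If \(\h_{\rm low}\) is only \(C^1\) on \(D\) itself, I would instead work on a slightly smaller open set, or use its one-sided derivatives on segments.

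An explicit constant can also be written down. The resolvent identity gives
\[
\mathcal A(\bw)^{-1}-\mathcal A(\bz)^{-1}=\mathcal A(\bw)^{-1}\bigl(\mathcal A(\bz)-\mathcal A(\bw)\bigr)\mathcal A(\bz)^{-1}.
\]
The difference \(\mathcal A(\bz)-\mathcal A(\bw)\) is bounded by \(C\|\bw-\bz\|\), since it is nodal and linear in \(\bw\). Continuity of the inverse on the compact set \(D\) gives \(\sup_D\|\mathcal A^{-1}\|<\infty\).

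The main obstacle is this uniform inverse bound. Continuity plus compactness suffices for finite \(N\), which is all the proposition claims. The result makes no \(N\)-uniformity claim, and I would not attempt one here.

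\textbf{Contraction and the rate estimate.} Let \(\bw,\bz\in D\). Then
\[
\|G_{k,n}(\bw)-G_{k,n}(\bz)\|_N=\tfrac k2\|\mathcal V_N(\bw)-\mathcal V_N(\bz)\|_N\le q_{\rm P}\|\bw-\bz\|_N,
\]
with \(q_{\rm P}<1\) by assumption. Together with \(G_{k,n}(D)\subset D\) and the completeness of the closed set \(D\), Banach's fixed-point theorem gives a unique fixed point \(\bw^*\in D\).

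This fixed point is the midpoint solution. It satisfies \(\bw^*=\m_h^n+\tfrac k2\bv\), where \(\bv\) solves the system frozen at \(\bw^*\). That is exactly \eqref{C-N} with \(\m_h^{n+1/2}=\bw^*\), so \(\bw^*=\m_h^{n+\frac12}\).

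For the rate, induction gives \(\|\widehat\m^{p+1}-\widehat\m^p\|_N\le q_{\rm P}^p\|\widehat\m^1-\widehat\m^0\|_N\). Summing the geometric tail \(\sum_{j\ge p}q_{\rm P}^j\) yields \eqref{eq:picard-rate}.
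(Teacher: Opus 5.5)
Your proof is correct and follows the paper's argument: Proposition~\ref{prop:saddle-wellposed} gives nonsingularity of the frozen saddle matrix on \(D\), compactness of \(D\) together with smooth finite-dimensional dependence of the frozen solve gives a finite \(L_{V,N}\), and the Banach contraction bound then gives the unique fixed point and the rate. You also fill in details the paper only sketches: the explicit \(C^1\) solution map, the mean-value and resolvent-identity Lipschitz constant, and the identification of the fixed point with the midpoint solution of \eqref{C-N}.
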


Indeed, Proposition~\ref{prop:saddle-wellposed}, compactness of \(D\), and
smooth finite-dimensional dependence of the frozen solve give \(L_{V,N}<\infty\);
the stated estimate is the Banach contraction bound. This standard argument
is also used for inexact midpoint solvers in computational micromagnetics
\cite{PRS:CMA:2018}. The constant \(L_{V,N}\) may depend on \(N\), so this is
not a mesh-uniform Picard theorem.
Thus the midpoint scheme uses the same preconditioner as the Euler scheme,
with \(k\widehat C_{\rm exc}/2\) in the exchange block. Starting from
\eqref{hatm}, each Picard step
updates the lower-order field and nodal blocks, solves the preconditioned
saddle system, and sets
\(\widehat\m_h^{p+1}=\m_h^n+(k/2)\bv_h^p\); after convergence,
\(\m_h^{n+1}=\m_h^n+k\bv_h^{n+1/2}\).

\begin{proposition}[Nodal constraint and discrete energy law]
\label{prop:midpoint-energy}
Assume that \(|\m_h^n(\bx_j)|=1\) at every node and that the midpoint
equation \eqref{C-N} is solved exactly. Then
\[
 |\m_h^{n+1}(\bx_j)|=1\qquad\text{for every }\bx_j\in\mathcal G_N.
\]
Suppose additionally that the applied field is time independent and that the
discrete effective field is generated by a quadratic-affine energy \(g_N\),
\[
 Dg_N(\m)[\bphi]=-(\h_{\rm eff,N}(\m),\bphi)_N .
\]
This class includes the discrete exchange, periodic demagnetization, uniaxial
anisotropy, and Zeeman terms used in Section~\ref{sec:energy-constraint}. Then
\begin{equation}\label{eq:midpoint-energy-law}
 g_N(\m_h^{n+1})-g_N(\m_h^n)
 =-\frac{\alpha k}{1+\alpha^2}
 \|\bv_h^{n+\frac12}\|_N^2\le0 .
\end{equation}
\end{proposition}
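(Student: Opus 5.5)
The plan is to exploit the nodal tangent constraint and the identity between \(\bv_h^{n+1/2}\) and the increment \(\m_h^{n+1}-\m_h^n\). Both claims follow from testing \eqref{C-N} with suitable grid functions and using the discrete inner product \((\cdot,\cdot)_N\). Throughout we use that \((\cdot,\cdot)_h\) and \((\cdot,\cdot)_N\) coincide, and that the exchange form \((\nabla\cdot,\nabla\cdot)\) is the Fourier-collocation form \((-\Delta_N\cdot,\cdot)_N\).

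For the nodal constraint we use the second equation of \eqref{C-N}, \((\bv^{n+1/2}_h\cdot\m_h^{n+1/2},q_h)_N=0\) for all \(q_h\in Q_h=V_N\). Since the grid values of \(V_N\) span all nodal vectors, we may take \(q_h\) to be the nodal Lagrange basis function of \(\bx_j\). This gives \(\bv^{n+1/2}_h(\bx_j)\cdot\m_h^{n+1/2}(\bx_j)=0\) at every node. Multiplying by \(2k\) and using \(k\bv^{n+1/2}=\m^{n+1}-\m^n\) and \(2\m^{n+1/2}=\m^{n+1}+\m^n\) yields
\[
 |\m_h^{n+1}(\bx_j)|^2-|\m_h^n(\bx_j)|^2=0 .
\]
Hence the unit length is inherited from \(|\m_h^n(\bx_j)|=1\).

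For the energy law we test the first equation with \(\bphi_h=\bv_h^{n+1/2}\). Three of the terms simplify.
\begin{itemize}
\item The cross-product term vanishes nodewise, since \((\m\times\bv)\cdot\bv=0\).
\item The multiplier term \((\m_h^{n+1/2}\cdot\bv_h^{n+1/2},r_h)_N\) vanishes by the nodal orthogonality just proved.
\item The damping term gives \(\alpha\|\bv_h^{n+1/2}\|_N^2\).
\end{itemize}
The remaining terms combine into \(-(1+\alpha^2)(\h_{\mathrm{eff},N}(\m_h^{n+1/2}),\bv_h^{n+1/2})_N\), where \(\h_{\mathrm{eff},N}=\widehat C_{\rm exc}(1+\alpha^2)^{-1}\Delta_N\m+\h_{\rm low}\) is normalized consistently with \eqref{heff}. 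We then divide by \(1+\alpha^2\). With time-independent applied field, the relation \(Dg_N(\m)[\bphi]=-(\h_{\mathrm{eff},N}(\m),\bphi)_N\) gives
\[
 -(\h_{\mathrm{eff},N}(\m_h^{n+1/2}),\bv_h^{n+1/2})_N = Dg_N(\m_h^{n+1/2})[\bv_h^{n+1/2}].
\]

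The key algebraic step is the midpoint rule for quadratic-affine functionals. If \(g_N(\m)=\tfrac12 a(\m,\m)+\ell(\m)+c\) with \(a\) symmetric bilinear, then exactly
\[
 g_N(\m^{n+1})-g_N(\m^n)=Dg_N\bigl(\tfrac12(\m^{n+1}+\m^n)\bigr)[\m^{n+1}-\m^n],
\]
because the difference of quadratics factors as \(a(\m^{n+1}+\m^n,\m^{n+1}-\m^n)/2\). Applying this with \(\m^{n+1}-\m^n=k\bv^{n+1/2}\) and combining with the tested identity gives
\[
 \frac{\alpha}{1+\alpha^2}\|\bv\|_N^2+\frac1k\bigl(g_N(\m^{n+1})-g_N(\m^n)\bigr)=0 ,
\]
which is \eqref{eq:midpoint-energy-law}.

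The main obstacle is bookkeeping rather than analysis. We must check that each listed energy term is quadratic-affine with a symmetric bilinear part, and that the scaling constants match. The exchange term is symmetric because \(-\Delta_N\) is symmetric in \((\cdot,\cdot)_N\) by Parseval. The periodic demagnetization term is symmetric because the Fourier symbol \(\bm\xi\bm\xi^T/|\bm\xi|^2\) is a symmetric multiplier. Anisotropy \((\bm e_{\rm an}\cdot\m)^2\) is symmetric, and Zeeman is affine. We must also confirm that \(\widehat C_{\rm exc}\) and the factor \((1+\alpha^2)\) on \(\h_{\rm low}\) combine into \((1+\alpha^2)\h_{\mathrm{eff},N}\). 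Finally, the midpoint evaluation at \(t_{n+1/2}\) is harmless because the field is assumed time independent.
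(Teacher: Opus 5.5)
Your proposal is correct and follows the paper's proof essentially step by step. Nodal orthogonality, obtained from the arbitrary multiplier test function, gives length preservation; testing with \(\bv_h^{n+\frac12}\) removes the cross-product and multiplier terms; and the exact midpoint identity for the quadratic-affine \(g_N\) closes the energy law. You additionally spell out details the paper leaves implicit: the proof of the midpoint identity, the scaling \(\widehat C_{\rm exc}/(1+\alpha^2)\), and the symmetry of the exchange and demagnetization Fourier multipliers.
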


\begin{proof}
Because \(q_h\) is arbitrary, the nodal constraint in \eqref{C-N} implies
\[
\m_h^{n+\frac12}(\bx_j)\cdot
\bv_h^{n+\frac12}(\bx_j)=0
\]
at every node. Hence
\[
 |\m_h^{n+1}(\bx_j)|^2-|\m_h^n(\bx_j)|^2
 =2k\,\m_h^{n+\frac12}(\bx_j)\cdot
 \bv_h^{n+\frac12}(\bx_j)=0 .
\]
Taking \(\bphi_h=\bv_h^{n+\frac12}\) in the midpoint equation eliminates
the cross product and gives
\[
 \alpha\|\bv_h^{n+\frac12}\|_N^2
 =-(1+\alpha^2)
 Dg_N(\m_h^{n+\frac12})[\bv_h^{n+\frac12}] .
\]
For a quadratic-affine functional, the midpoint identity is exact:
\[
 g_N(\m_h^{n+1})-g_N(\m_h^n)
 =kDg_N(\m_h^{n+\frac12})[\bv_h^{n+\frac12}] .
\]
Combining the last two identities proves \eqref{eq:midpoint-energy-law}.
\end{proof}

\begin{remark}[Effect of an inexact midpoint solve]
\label{rem:inexact-energy}
Suppose an inexact saddle solve has primal and constraint residuals
\(\rho_n\) and \(\chi_n\):
\[
\begin{aligned}
 &\text{left-hand side of the first equation in \eqref{C-N}}
 -\text{right-hand side}
 =\rho_n(\bphi_h),\\
 &(\bv_h^{n+\frac12}\cdot\m_h^{n+\frac12},q_h)_h
 =\chi_n(q_h).
\end{aligned}
\]
Then the length defect satisfies
\[
 \bigl(|\m_h^{n+1}|^2-|\m_h^n|^2,q_h\bigr)_N
 =2k\chi_n(q_h),
\]
and the energy identity becomes
\[
 g_N(\m_h^{n+1})-g_N(\m_h^n)
 =-\frac{\alpha k}{1+\alpha^2}\|\bv_h^{n+\frac12}\|_N^2
 +\frac{k}{1+\alpha^2}
 \left[\rho_n(\bv_h^{n+\frac12})-\chi_n(r_h)\right].
\]
Thus monotonicity is retained whenever
\(\rho_n(\bv_h^{n+\frac12})-\chi_n(r_h)
\le\alpha\|\bv_h^{n+\frac12}\|_N^2\). This condition explains why the
algebraic tolerances and constraint error must be reported together with an
observed energy law.
\end{remark}

The midpoint velocity is tangent to the midpoint configuration, so the method
preserves nodal length without projection. Proposition~\ref{prop:midpoint-energy}
also identifies the assumptions under which energy decay is exact; a
time-dependent applied field or an inexact nonlinear solve introduces the
explicit correction in Remark~\ref{rem:inexact-energy}.

\subsection{Convergence analysis}
\label{subsec:convergence-analysis}

We distinguish three objects that are easily conflated: the full saddle
matrix, the ideal tangent-reduced velocity operator, and the inexact
time-stepping map. Proposition~\ref{prop:saddle-wellposed} treats the first.
We first analyze the second, then use the exact nodal structure of the
constraint block to obtain a separate two-step GMRES bound for the full
preconditioned saddle system.

After scaling out the constant quadrature weight, let
\(Z\in\mathbb R^{3N_g\times2N_g}\) have orthonormal columns spanning
\(\ker B\). Eliminating the multiplier and writing
\(\bv=Z\by\) gives
\[
 A_T\by=Z^T\bm f,\qquad
 A_T=H_T+S_T,\quad H_T=Z^THZ,\quad S_T=Z^TSZ .
\]
Here \(H_T=H_T^T\succ0\) and \(S_T^T=-S_T\).

\begin{proposition}[Classical reduced-space HSS bound]
\label{thm:hss-rate}
For \(\lambda>0\), let
\[
 T_{\lambda,T}=(\lambda I+S_T)^{-1}(\lambda I-H_T)
 (\lambda I+H_T)^{-1}(\lambda I-S_T).
\]
Because \(H_T\succ0\) and \(S_T^T=-S_T\), the classical HSS theorem
\cite{BGN:SJMAA:2003} gives, in the norm
\(\|e\|_{\lambda,S_T}=\|(\lambda I+S_T)e\|_2\),
\begin{equation}\label{eq:hss-rate}
 \|e^{(j)}\|_{\lambda,S_T}
 \le q_T(\lambda)^j\|e^{(0)}\|_{\lambda,S_T},\qquad
 q_T(\lambda)=\max_{\mu\in\sigma(H_T)}
 \left|\frac{\lambda-\mu}{\lambda+\mu}\right|<1.
\end{equation}
Writing
\(\mu_{\min}=\lambda_{\min}(H_T)\),
\(\mu_{\max}=\lambda_{\max}(H_T)\), the parameter minimizing this bound is
\begin{equation}\label{eq:hss-optimal}
 \lambda_*=\sqrt{\mu_{\min}\mu_{\max}},\qquad
 q_*=\frac{\sqrt{\kappa_T}-1}{\sqrt{\kappa_T}+1},\qquad
 \kappa_T=\frac{\mu_{\max}}{\mu_{\min}}.
\end{equation}
For \(M_{\lambda,T}=(\lambda I+H_T)(\lambda I+S_T)\), the defining
factorization also gives
\begin{equation}\label{eq:preconditioned-identity}
 M_{\lambda,T}^{-1}A_T
 =\frac{1}{2\lambda}(I-T_{\lambda,T}),
\qquad
 \sigma(M_{\lambda,T}^{-1}A_T)
 \subset
 D\!\left(\frac{1}{2\lambda},
          \frac{q_T(\lambda)}{2\lambda}\right).
\end{equation}
\end{proposition}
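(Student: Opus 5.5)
The plan is to reproduce the Bai--Golub--Ng argument in the reduced space and then read off the preconditioned identity from the factorization. First I check the hypotheses. Since \(Z\) has orthonormal columns and \(H\succ0\), we have \(H_T=Z^THZ\succ0\). Also \(S_T^T=Z^TS^TZ=-S_T\).

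\textbf{Invertibility of the shifted factors.} For \(\lambda>0\), \(\lambda I+H_T\) is SPD. For the skew factor,
\[
\|(\lambda I+S_T)\bx\|_2^2=\lambda^2\|\bx\|_2^2+\|S_T\bx\|_2^2,
\]
exactly as in Proposition~\ref{prop:saddle-wellposed}, so \(\lambda I+S_T\) is nonsingular.

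\textbf{Contraction.} The reduced HSS error satisfies \(e^{(j+1)}=T_{\lambda,T}e^{(j)}\). Conjugating by \(\lambda I+S_T\) gives
\[
\widetilde T=(\lambda I-H_T)(\lambda I+H_T)^{-1}(\lambda I-S_T)(\lambda I+S_T)^{-1},
\]
so that \(\|e^{(j+1)}\|_{\lambda,S_T}=\|\widetilde T(\lambda I+S_T)e^{(j)}\|_2\). The Cayley factor \((\lambda I-S_T)(\lambda I+S_T)^{-1}\) is orthogonal: the two factors commute, and transposing turns \(S_T\) into \(-S_T\). The first factor is symmetric with eigenvalues \((\lambda-\mu)/(\lambda+\mu)\), \(\mu\in\sigma(H_T)\). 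Hence \(\|\widetilde T\|_2=q_T(\lambda)\), and induction gives \eqref{eq:hss-rate}. Since \(\mu>0\) and \(\lambda>0\), we have \(|\lambda-\mu|<\lambda+\mu\), so \(q_T<1\).

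\textbf{Optimal parameter.} The function \(\mu\mapsto|(\lambda-\mu)/(\lambda+\mu)|\) on \([\mu_{\min},\mu_{\max}]\) attains its maximum at an endpoint. The max of the two endpoint values is minimized where they balance:
\[
\frac{\lambda-\mu_{\min}}{\lambda+\mu_{\min}}=\frac{\mu_{\max}-\lambda}{\mu_{\max}+\lambda}.
\]
This gives \(\lambda^2=\mu_{\min}\mu_{\max}\). Substituting and dividing by \(\sqrt{\mu_{\min}}\) yields \(q_*=(\sqrt{\kappa_T}-1)/(\sqrt{\kappa_T}+1)\).

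\textbf{Preconditioned identity.} Expand
\[
(\lambda I+H_T)(\lambda I+S_T)-(\lambda I-H_T)(\lambda I-S_T)=2\lambda(H_T+S_T)=2\lambda A_T.
\]
The second product equals \(M_{\lambda,T}T_{\lambda,T}\); this uses the same commutation of the two Cayley-type factors as in the contraction step and is the one place to check carefully. Hence \(M_{\lambda,T}(I-T_{\lambda,T})=2\lambda A_T\), and multiplying by \(M_{\lambda,T}^{-1}\) gives the identity in \eqref{eq:preconditioned-identity}.

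\textbf{Spectral inclusion.} Each eigenvalue of \(M_{\lambda,T}^{-1}A_T\) has the form \((1-\tau)/(2\lambda)\) with \(\tau\in\sigma(T_{\lambda,T})\). Since \(T_{\lambda,T}\) is similar to \(\widetilde T\), we get \(|\tau|\le\rho(T_{\lambda,T})\le\|\widetilde T\|_2=q_T(\lambda)\). This places the spectrum in the disc centered at \(1/(2\lambda)\) with radius \(q_T(\lambda)/(2\lambda)\).

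The main obstacle is only bookkeeping: the ordering of factors when relating \(T_{\lambda,T}\), \(\widetilde T\) and \(M_{\lambda,T}\), and the choice of norm weight \((\lambda I+S_T)\). Once the similarity is fixed, everything else follows.
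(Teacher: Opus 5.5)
Your proposal is correct and follows the paper's route. The paper invokes the Bai--Golub--Ng argument (similarity by \(\lambda I+S_T\), orthogonality of the Cayley factor, and symmetry of \((\lambda I-H_T)(\lambda I+H_T)^{-1}\)) together with the factorization \(M_{\lambda,T}-(\lambda I-H_T)(\lambda I-S_T)=2\lambda A_T\), and you have written these steps out in full. One clarification: the only commutation needed for \(M_{\lambda,T}T_{\lambda,T}=(\lambda I-H_T)(\lambda I-S_T)\) is that \(\lambda I+H_T\) commutes with \(\lambda I-H_T\); the \(H_T\)- and \(S_T\)-Cayley factors themselves do not commute in general, so ``the same commutation'' should be read in that sense.
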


\begin{corollary}[Computable Fourier spectral enclosure]
\label{cor:hss-fourier}
Let
\[
 \beta_N=\alpha+\theta k\widehat C_{\rm exc}
 \max_{\bm\xi\in\mathcal F_N}|\bm\xi|^2,\qquad
 \theta=1\ \text{(Euler)},\quad
 \theta=\tfrac12\ \text{(midpoint)}.
\]
Then \(\sigma(H_T)\subset[\alpha,\beta_N]\) and
\begin{equation}\label{eq:hss-enclosure}
 q_T(\lambda)\le\overline q_N(\lambda):=
 \max\left\{
 \left|\frac{\lambda-\alpha}{\lambda+\alpha}\right|,
 \left|\frac{\lambda-\beta_N}{\lambda+\beta_N}\right|
 \right\}.
\end{equation}
The enclosure is minimized by
\[
 \lambda_{\rm bd}=\sqrt{\alpha\beta_N},\qquad
 \overline q_{N,*}
 =\frac{\sqrt{\beta_N/\alpha}-1}
 {\sqrt{\beta_N/\alpha}+1}.
\]
\end{corollary}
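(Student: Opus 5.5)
The plan has two parts: first confine $\sigma(H_T)$ by a Rayleigh-quotient argument, then minimize the resulting scalar bound over $\lambda$.

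\textbf{Step 1: the enclosure of $\sigma(H_T)$.} Since $Z$ has orthonormal columns, every eigenvalue of $H_T=Z^THZ$ is a Rayleigh quotient
\[
\frac{\by^T Z^T H Z \by}{\by^T\by}=\frac{\bv^TH\bv}{\bv^T\bv},\qquad \bv=Z\by .
\]
So $\sigma(H_T)\subset[\lambda_{\min}(H),\lambda_{\max}(H)]$ by Cauchy interlacing, or directly by the Courant--Fischer min--max characterization. After dividing out the quadrature weight, $H=\Pi_1+\Pi_3$ acts componentwise as $\alpha I+\theta k\widehat C_{\rm exc}(-\Delta_N)$. This is exactly the operator inverted in Algorithm~\ref{fh_pre}, with $\lambda$ removed. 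By the Parseval identity stated before Algorithm~\ref{fh_pre}, it is diagonalized by the (unitary, normalized) DFT, with eigenvalues $\alpha+\theta k\widehat C_{\rm exc}|\bm\xi|^2$ for $\bm\xi\in\mathcal F_N$. Since $|\bm\xi|^2\ge0$, with the zero mode attaining $0$, we get $\lambda_{\min}(H)=\alpha$ and $\lambda_{\max}(H)=\beta_N$. Hence $\sigma(H_T)\subset[\alpha,\beta_N]$.

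For the midpoint case, I will note that the velocity block of the frozen Picard system has the same form with $\theta=\tfrac12$. The nodal blocks only enter $S$ and $B$, not $H$, so the argument is unchanged.

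\textbf{Step 2: the bound on $q_T(\lambda)$.} For fixed $\lambda>0$, the function $\phi(\mu)=|(\lambda-\mu)/(\lambda+\mu)|$ on $\mu>0$ decreases on $(0,\lambda]$ and increases on $[\lambda,\infty)$. This follows because $(\lambda-\mu)/(\lambda+\mu)$ is strictly decreasing in $\mu$ and vanishes at $\mu=\lambda$. Therefore the maximum of $\phi$ over any subset of $[\alpha,\beta_N]$ is at most $\max\{\phi(\alpha),\phi(\beta_N)\}$. Combined with Step 1 and the definition of $q_T(\lambda)$ in Proposition~\ref{thm:hss-rate}, this gives \eqref{eq:hss-enclosure}.

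\textbf{Step 3: minimizing $\overline q_N(\lambda)$.} If $\alpha=\beta_N$ the claim is trivial, so assume $\alpha<\beta_N$. For $\lambda\in[\alpha,\beta_N]$ we have
\[
\phi(\alpha)=\frac{\lambda-\alpha}{\lambda+\alpha},
\]
which is increasing in $\lambda$, and
\[
\phi(\beta_N)=\frac{\beta_N-\lambda}{\beta_N+\lambda},
\]
which is decreasing in $\lambda$. Outside this interval, one of the two terms is at least its value at the nearer endpoint, so $\overline q_N$ is larger there. The minimizer of the maximum of an increasing and a decreasing function is where they cross:
\[
(\lambda-\alpha)(\beta_N+\lambda)=(\beta_N-\lambda)(\lambda+\alpha).
\]
This simplifies to $\lambda^2=\alpha\beta_N$, so $\lambda_{\rm bd}=\sqrt{\alpha\beta_N}$. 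Substituting and dividing numerator and denominator by $\sqrt\alpha$ gives
\[
\overline q_{N,*}=\frac{\sqrt{\beta_N}-\sqrt\alpha}{\sqrt{\beta_N}+\sqrt\alpha}=\frac{\sqrt{\beta_N/\alpha}-1}{\sqrt{\beta_N/\alpha}+1}.
\]
This mirrors \eqref{eq:hss-optimal}, with $\mu_{\min},\mu_{\max}$ replaced by $\alpha,\beta_N$.

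The only mildly delicate step is Step 1. One must check two things: that the scaled discrete inner product makes the zeroth-order block exactly $\alpha I$, and that the projected operator $Z^THZ$, rather than $H$ itself, inherits the bounds. Orthonormality of $Z$ settles the second point.
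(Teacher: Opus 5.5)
Your proposal is correct and follows the paper's own argument. The paper justifies the corollary in one line: a Rayleigh-quotient bound for $Z^THZ$ with orthonormal $Z$, combined with the Fourier symbol $\alpha+\theta k\widehat C_{\rm exc}|\bm\xi|^2$ of $H$, which is exactly your Step~1; your Steps~2--3 (the unimodality of $|(\lambda-\mu)/(\lambda+\mu)|$ in $\mu$ and the crossing condition $\lambda^2=\alpha\beta_N$) spell out the elementary minimization that the paper leaves implicit by analogy with \eqref{eq:hss-optimal}.
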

This follows immediately from the Rayleigh quotient and the Fourier symbol
of \(-\Delta_N\); no additional HSS argument is required.

\begin{theorem}[Full saddle two-step contraction and GMRES bound]
\label{thm:full-saddle-gmres}
Assume \(H=H^T\succ0\), let \(\lambda>0\), and suppose that the nodal norms
satisfy
\[
 0<m_-\le\|\m_i\|_2\le m_+,
\]
and define the full HSS error map
\[
 \mathcal T_\lambda
 =(\lambda\mathcal I+\mathcal S)^{-1}
  (\lambda\mathcal I-\mathcal H)
  (\lambda\mathcal I+\mathcal H)^{-1}
  (\lambda\mathcal I-\mathcal S).
\]
Set
\[
\begin{aligned}
 q_H(\lambda)
 &=\max_{\mu\in\sigma(H)}
   \left|\frac{\lambda-\mu}{\lambda+\mu}\right|<1,\\
 \gamma_m(\lambda)
 &=\min_i\frac{2\lambda\|\m_i\|_2}
                   {\lambda^2+\|\m_i\|_2^2},\\
 \vartheta_2(\lambda)
 &=\left[
 1-\frac{(1-q_H(\lambda)^2)\gamma_m(\lambda)^2}
          {\gamma_m(\lambda)^2+1+q_H(\lambda)^2}
 \right]^{1/2}<1 .
\end{aligned}
\]
Then, in the norm
\(\|\bx\|_{\lambda,\mathcal S}
 :=\|(\lambda\mathcal I+\mathcal S)\bx\|_2\),
\begin{equation}\label{eq:full-two-step}
 \|\mathcal T_\lambda^{\,2j}\bx\|_{\lambda,\mathcal S}
 \le\vartheta_2(\lambda)^j
 \|\bx\|_{\lambda,\mathcal S}.
\end{equation}
Consequently, the full stationary HSS iteration converges.

Let
\(\mathcal P_\lambda=\mathcal M_\lambda^{-1}\mathcal A\)
be the actual left-preconditioned saddle operator from
Section~\ref{sec3.2}. For unrestarted GMRES applied to
\(\mathcal P_\lambda\), define its preconditioned residual by
\(\mathfrak r_j=\mathcal M_\lambda^{-1}(\bb-\mathcal A\bx_j)\).
At every even iteration it satisfies
\begin{equation}\label{eq:full-gmres-bound}
 \frac{\|\mathfrak r_{2j}\|_2}{\|\mathfrak r_0\|_2}
 \le \kappa_\lambda\,\vartheta_2(\lambda)^j,\qquad
 \kappa_\lambda=
 \left(\frac{\lambda^2+m_+^2}{\lambda^2+m_-^2}\right)^{1/2}.
\end{equation}
For unit nodal magnetizations,
\(\kappa_\lambda=1\) and
\(\gamma_m=2\lambda/(\lambda^2+1)\).
\end{theorem}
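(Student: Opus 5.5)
The plan is to conjugate the error map into an operator that is a contraction composed with an orthogonal map, and then exploit the exact nodal structure of the Cayley transform of \(\mathcal S\). Set \(\mathcal W_\lambda=(\lambda\mathcal I+\mathcal S)\mathcal T_\lambda(\lambda\mathcal I+\mathcal S)^{-1}=\mathcal R_H\,\mathcal C\). Here \(\mathcal R_H=(\lambda\mathcal I-\mathcal H)(\lambda\mathcal I+\mathcal H)^{-1}\) and \(\mathcal C=(\lambda\mathcal I-\mathcal S)(\lambda\mathcal I+\mathcal S)^{-1}\). This gives \(\|\mathcal T_\lambda^{2j}\bx\|_{\lambda,\mathcal S}=\|\mathcal W_\lambda^{2j}(\lambda\mathcal I+\mathcal S)\bx\|_2\), so \eqref{eq:full-two-step} reduces to the estimate \(\|\mathcal W_\lambda^2\|_2\le\vartheta_2(\lambda)\) followed by submultiplicativity. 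By \eqref{HS}, \(\mathcal R_H=\diag(R,I)\) with \(R=(\lambda I-H)(\lambda I+H)^{-1}\) symmetric, so \(\|R\|_2=q_H(\lambda)\); the multiplier block is exactly the identity. Since \(\mathcal S\) is skew, \(\mathcal C\) is orthogonal (Cayley transform). After the nodal reordering of Section~\ref{sec3.1}, the identity \((\lambda I+\tilde S_i)^{-1}=(\lambda I-\tilde S_i)/(\lambda^2+\|\m_i\|_2^2)\) together with \(\tilde S_i^2=-\|\m_i\|_2^2I\) gives
\[
\mathcal C_i=\frac{(\lambda^2-\|\m_i\|_2^2)I-2\lambda\tilde S_i}{\lambda^2+\|\m_i\|_2^2}.
\]
Because the \((4,4)\) entry of \(\tilde S_i\) vanishes and its fourth column is \((B_i^T;0)\), the matrix \(\mathcal C_i\) sends a pure multiplier at node \(i\) into the velocity slots with squared fraction exactly \(4\lambda^2\|\m_i\|_2^2/(\lambda^2+\|\m_i\|_2^2)^2\ge\gamma_m(\lambda)^2\). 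Globally, for \(\by=(0,\bm s)\) we get \(\|(\mathcal C\by)_{v}\|_2\ge\gamma_m\|\bm s\|_2\).

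The two-step estimate proceeds as follows. Take \(\|\by\|_2=1\), let \(\bz=\mathcal C\by=(\bz_v,\bz_r)\), and set \(\bw=\mathcal W_\lambda\by=(R\bz_v,\bz_r)\). Then \(\|\bw\|_2^2\le 1-(1-q_H^2)\|\bz_v\|_2^2\), so the first step already loses \((1-q_H^2)a^2\) with \(a=\|\bz_v\|_2\). If \(a\) is small, \(\bw\) is nearly a pure multiplier vector. Write \(\bw=(0,\bz_r)+(R\bz_v,0)\) and apply \(\mathcal C\). The first piece puts at least \(\gamma_m\|\bz_r\|_2\) into velocity, while the second, by orthogonality, perturbs this by at most \(q_Ha\). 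Hence \(\|(\mathcal C\bw)_v\|_2\ge\gamma_m\sqrt{1-a^2}-q_Ha\), and the second step loses \((1-q_H^2)\) times the square of this quantity. Adding both losses gives
\[
\|\mathcal W_\lambda^2\by\|_2^2\le 1-(1-q_H^2)\bigl[a^2+(\gamma_m\sqrt{1-a^2}-q_Ha)_+^2\bigr],
\]
and the remaining task is to minimize the bracket over \(a\in[0,1]\). I expect this optimization to be the main obstacle: the constant must come out exactly as \(\gamma_m^2/(\gamma_m^2+1+q_H^2)\). My first attempt will be the crude bound \((x-y)^2\ge x^2/(1+t)-y^2/t\), using \(1-a^2\le1\) appropriately, with \(t\) tuned. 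Choosing \(t\) so that the \(a^2\) coefficients balance should produce the denominator \(\gamma_m^2+1+q_H^2\). If the constant does not match, I will redo the perturbation split with the weighted Cauchy--Schwarz inequality directly on \(\|(\mathcal C\bw)_v\|_2^2\). Convergence of the stationary iteration then follows from \(\vartheta_2<1\), which holds because \(q_H<1\) (since \(H\succ0\)) and \(\gamma_m>0\) (since \(m_->0\)).

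For the GMRES bound, I will use the identity \(\mathcal M_\lambda^{-1}\mathcal A=\frac{1}{2\lambda}(\mathcal I-\mathcal T_\lambda)\), which follows from the same factorization as \eqref{eq:preconditioned-identity}. It implies \(\mathcal I-2\lambda\mathcal P_\lambda=\mathcal T_\lambda\), so the polynomial \(p(z)=(1-2\lambda z)^{2j}\) is admissible. The GMRES minimality property then gives \(\|\mathfrak r_{2j}\|_2\le\|\mathcal T_\lambda^{2j}\mathfrak r_0\|_2\). To pass between norms, note that by \eqref{eq:local-scaled-orthogonal} the matrix \(\lambda\mathcal I+\mathcal S\) is, up to permutation, block diagonal with scaled orthogonal blocks. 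Its singular values therefore lie in \([\sqrt{\lambda^2+m_-^2},\sqrt{\lambda^2+m_+^2}]\), so the \(\|\cdot\|_2\) and \(\|\cdot\|_{\lambda,\mathcal S}\) norms are equivalent with ratio \(\kappa_\lambda\). Combining this equivalence with \eqref{eq:full-two-step} yields \eqref{eq:full-gmres-bound}. The unit-length specializations \(\kappa_\lambda=1\) and \(\gamma_m=2\lambda/(\lambda^2+1)\) are immediate.
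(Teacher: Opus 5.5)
Your proposal is correct and follows the paper's proof essentially step for step. The shared steps are:
\begin{itemize}
\item the conjugation of \(\mathcal T_\lambda\) to \(\mathcal R_H\mathcal C\), which is the paper's \(VU\);
\item the one-step loss \((1-q_H^2)\|\bz_v\|_2^2\);
\item the nodal transfer bound \(\gamma_m\) from multiplier to velocity through the Cayley transform;
\item the reverse-triangle perturbation;
\item the GMRES polynomial \((1-2\lambda z)^{2j}\) combined with the \(\kappa_\lambda\) norm equivalence.
\end{itemize}

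For the final scalar step, write \(c=\|(\mathcal C\bw)_v\|_2\). The paper uses your fallback, \(\gamma_m^2(1-a^2)\le(c+q_Ha)^2\le(1+q_H^2)(a^2+c^2)\), together with \(a^2+c^2\ge a^2\). Your Young-type route also works. The choice \(t=\gamma_m^2+q_H^2\) reproduces the stated constant exactly, with a nonnegative \(a^2\) coefficient left over. Exact balancing of the \(a^2\) coefficient gives a slightly larger lower bound, which implies the stated one.
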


\begin{proof}
Set \(V=\diag(V_H,I)\), where
\(V_H=(\lambda I-H)(\lambda I+H)^{-1}\), and set
\(U=(\lambda\mathcal I-\mathcal S)
(\lambda\mathcal I+\mathcal S)^{-1}\). Then \(U\) is orthogonal,
\(\|V_H\|_2=q_H\), and
\[
 (\lambda\mathcal I+\mathcal S)\mathcal T_\lambda
 (\lambda\mathcal I+\mathcal S)^{-1}=VU .
\]
Let \(\Pi_v\) project onto the velocity variables. For
\(\by=U\bx=(\bp,\bs)\),
\begin{equation}\label{eq:first-loss}
 \|VU\bx\|_2^2
 \le\|\bx\|_2^2-(1-q_H^2)\|\bp\|_2^2.
\end{equation}
The identity \(\widetilde S_i^2=-\|\m_i\|_2^2I\) gives directly
\(\|\Pi_vU(0,\bs)\|_2\ge\gamma_m\|\bs\|_2\).
With \(\bw_0=\bx\), \(\bw_1=VU\bw_0\), and
\(\bz=\Pi_vU\bw_1\), use
\[
 \bz=\Pi_vU(V_H\bp,0)+\Pi_vU(0,\bs).
\]
The reverse triangle inequality gives
\[
 \gamma_m\|\bs\|_2
 \le\|\bz\|_2+q_H\|\bp\|_2 .
\]
Consequently, by Cauchy--Schwarz and
\(\|\bx\|_2^2=\|\bp\|_2^2+\|\bs\|_2^2\),
\[
 \|\bp\|_2^2+\|\bz\|_2^2
 \ge
 \frac{\gamma_m^2}{\gamma_m^2+1+q_H^2}\|\bx\|_2^2 .
\]
Applying \eqref{eq:first-loss} to \(\bw_0\) and then \(\bw_1\) yields
\[
 \|VU\bw_1\|_2^2
 \le \|\bw_0\|_2^2-(1-q_H^2)
      (\|\bp\|_2^2+\|\bz\|_2^2)
 \le \vartheta_2^2\|\bw_0\|_2^2 .
\]
Similarity by \(\lambda\mathcal I+\mathcal S\) and induction prove
\eqref{eq:full-two-step}.

Finally,
\(\mathcal M_\lambda-(\lambda\mathcal I-\mathcal H)
(\lambda\mathcal I-\mathcal S)=2\lambda\mathcal A\) gives
\(\mathcal P_\lambda=(2\lambda)^{-1}
(\mathcal I-\mathcal T_\lambda)\).
The admissible GMRES polynomial \((1-2\lambda z)^{2j}\) yields
\(\|\mathfrak r_{2j}\|_2
\le\|\mathcal T_\lambda^{2j}\mathfrak r_0\|_2\).
The singular values of \(\lambda\mathcal I+\mathcal S\) lie between
\(\sqrt{\lambda^2+m_-^2}\) and
\(\sqrt{\lambda^2+m_+^2}\). Norm equivalence and
\eqref{eq:full-two-step} prove \eqref{eq:full-gmres-bound}.
\end{proof}

\begin{corollary}[Uniform iteration bound for bounded exchange stiffness]
\label{cor:kh-uniform}
Let
\[
 h:=\min_{1\le\ell\le d}\frac{L_\ell}{N_\ell},
\]
and consider any family of grids and time steps satisfying
\begin{equation}\label{eq:parabolic-family}
 \frac{k}{h^2}\le C_0
\end{equation}
for a constant \(C_0\) independent of \(k\) and \(h\). Assume that
\(0<m_-\le\|\m_i\|_2\le m_+\) uniformly over the family, and fix
\(\lambda>0\) independently of \(k\) and \(h\). Define
\[
\begin{aligned}
 \beta_0&=\alpha+\theta\widehat C_{\rm exc}d\pi^2C_0,\\
 q_0&=\max\left\{
 \left|\frac{\lambda-\alpha}{\lambda+\alpha}\right|,
 \left|\frac{\lambda-\beta_0}{\lambda+\beta_0}\right|
 \right\}<1,\\
 \gamma_0&=\min\left\{
 \frac{2\lambda m_-}{\lambda^2+m_-^2},
 \frac{2\lambda m_+}{\lambda^2+m_+^2}
 \right\}>0,\\
 \vartheta_0&=\left[
 1-\frac{(1-q_0^2)\gamma_0^2}
          {\gamma_0^2+1+q_0^2}
 \right]^{1/2}<1,\qquad
 \kappa_0=\left(
 \frac{\lambda^2+m_+^2}{\lambda^2+m_-^2}
 \right)^{1/2}.
\end{aligned}
\]
Then \(q_T(\lambda)\le q_0\), and the unrestarted left-preconditioned
GMRES residual satisfies
\begin{equation}\label{eq:kh-uniform-gmres}
 \frac{\|\mathfrak r_{2j}\|_2}{\|\mathfrak r_0\|_2}
 \le\kappa_0\vartheta_0^j .
\end{equation}
Consequently, for any \(0<\varepsilon<1\), the residual reduction
\(\|\mathfrak r_{2j}\|_2/\|\mathfrak r_0\|_2\le\varepsilon\) is guaranteed
after at most
\begin{equation}\label{eq:kh-uniform-count}
 2\left\lceil
 \frac{\log(\kappa_0/\varepsilon)}
      {\log(1/\vartheta_0)}
 \right\rceil
\end{equation}
GMRES iterations. The bound depends on \(C_0\), the physical coefficients,
the uniform nodal bounds, \(\lambda\), and \(\varepsilon\), but not on the
individual values of \(k\) or \(h\).
\end{corollary}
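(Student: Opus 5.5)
The plan is to reduce Corollary~\ref{cor:kh-uniform} to Theorem~\ref{thm:full-saddle-gmres}. I will show that each quantity entering \eqref{eq:full-gmres-bound} is dominated by its \(k,h\)-independent counterpart, and then use the fact that \(\vartheta_2\) is monotone in \(q_H\) and \(\gamma_m\).

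\textbf{Step 1: spectrum of \(H\).} Since \(H\) is the Fourier multiplier \(\alpha+\theta k\widehat C_{\rm exc}|\bm\xi|^2\), as in Corollary~\ref{cor:hss-fourier}, we have \(\sigma(H)\subset[\alpha,\beta_N]\). On a grid with \(N_\ell\) points the retained wave numbers satisfy \(|k_\ell|\le N_\ell/2\). Hence
\[
 |\bm\xi_\ell|\le \pi N_\ell/L_\ell\le \pi/h,
 \qquad\text{so}\qquad
 \max|\bm\xi|^2\le d\pi^2/h^2 .
\]
By \eqref{eq:parabolic-family} this gives \(\beta_N\le\alpha+\theta\widehat C_{\rm exc}d\pi^2 k/h^2\le\beta_0\). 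The map \(\mu\mapsto|(\lambda-\mu)/(\lambda+\mu)|\) is quasiconvex: it decreases on \((0,\lambda]\) and increases on \([\lambda,\infty)\). Its maximum over \([\alpha,\beta_N]\subset[\alpha,\beta_0]\) is therefore attained at an endpoint, which yields \(q_H(\lambda)\le q_0\). Since \(\sigma(H_T)\subset[\lambda_{\min}(H),\lambda_{\max}(H)]\) by the Rayleigh quotient, the same argument gives \(q_T(\lambda)\le q_0\). Finally \(q_0<1\), because \(\alpha,\beta_0,\lambda>0\).

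\textbf{Step 2: the constraint quantities.} The function \(t\mapsto 2\lambda t/(\lambda^2+t^2)\) increases on \((0,\lambda]\) and decreases on \([\lambda,\infty)\). Its minimum over \([m_-,m_+]\) is therefore attained at an endpoint, so \(\gamma_m(\lambda)\ge\gamma_0>0\). Directly from the definitions, \(\kappa_\lambda\le\kappa_0\), since \(\kappa_\lambda\) uses the actual extreme nodal norms, which lie in \([m_-,m_+]\).

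\textbf{Step 3: monotonicity of \(\vartheta\) and the conclusion.} I need \(\vartheta_2(\lambda)\le\vartheta_0\). Write
\[
 \vartheta^2=1-F(q,\gamma),\qquad F(q,\gamma)=\frac{(1-q^2)\gamma^2}{\gamma^2+1+q^2}.
\]
Then \(F\) is decreasing in \(q\in[0,1)\): the numerator decreases and the denominator increases. It is increasing in \(\gamma\), because \(\gamma^2/(\gamma^2+c)\) is increasing for \(c>0\). Steps 1 and 2 then give \(F(q_H,\gamma_m)\ge F(q_0,\gamma_0)\), i.e. \(\vartheta_2(\lambda)\le\vartheta_0\). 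Inserting this into \eqref{eq:full-gmres-bound} yields \eqref{eq:kh-uniform-gmres}.

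For the count \eqref{eq:kh-uniform-count}, I would solve \(\kappa_0\vartheta_0^j\le\varepsilon\) for \(j\). This gives \(j\ge\log(\kappa_0/\varepsilon)/\log(1/\vartheta_0)\), which is positive because \(\kappa_0\ge1>\varepsilon\). Then \(2j\) iterations suffice.

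\textbf{Main obstacle.} The only delicate point is the Fourier bound \(\max|\bm\xi|^2\le d\pi^2/h^2\). It requires the paper's convention for the Nyquist mode and the use of \(h=\min_\ell L_\ell/N_\ell\), so that every direction is controlled by the same \(h\). I would state the wave-number range explicitly and check the even-\(N_\ell\) Nyquist case, where \(|k_\ell|=N_\ell/2\) still gives \(|\bm\xi_\ell|=\pi N_\ell/L_\ell\le\pi/h\).
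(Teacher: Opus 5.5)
Your proposal is correct and follows the paper's proof essentially step for step: the symbol bound \(|\bm\xi|^2\le\sum_\ell\pi^2/h_\ell^2\le d\pi^2/h^2\) giving \(\sigma(H),\sigma(H_T)\subset[\alpha,\beta_0]\), the endpoint arguments yielding \(q_H,q_T\le q_0\) and \(\gamma_m\ge\gamma_0\), the monotonicity of \(F\), and then Theorem~\ref{thm:full-saddle-gmres}. Your extra details only make explicit what the paper leaves implicit, namely the quasiconvexity of \(\mu\mapsto|(\lambda-\mu)/(\lambda+\mu)|\), the comparison \(\kappa_\lambda\le\kappa_0\), and the Nyquist-mode convention.
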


\begin{proof}
With \(h_\ell=L_\ell/N_\ell\), every Fourier wave number satisfies
\(|\bm\xi|^2\le\sum_\ell\pi^2/h_\ell^2\), while the definition of \(h\)
gives \(\sum_\ell\pi^2/h_\ell^2\le d\pi^2/h^2\).
Thus \eqref{eq:parabolic-family} gives
\(\sigma(H)\subset[\alpha,\beta_0]\), and the Rayleigh--Ritz principle gives
the same enclosure for \(H_T\); hence \(q_T,q_H\le q_0\).
The function \(2\lambda s/(\lambda^2+s^2)\) attains its minimum over
\([m_-,m_+]\) at an endpoint, so \(\gamma_m\ge\gamma_0\).
For \(F(x,y)=(1-x)y/(y+1+x)\), direct differentiation shows that \(F\)
decreases with \(x\) and increases with \(y\). Therefore
\(\vartheta_2=(1-F(q_H^2,\gamma_m^2))^{1/2}\le\vartheta_0\).
Theorem~\ref{thm:full-saddle-gmres} proves
\eqref{eq:kh-uniform-gmres}, and solving
\(\kappa_0\vartheta_0^j\le\varepsilon\) gives
\eqref{eq:kh-uniform-count}.
\end{proof}

\begin{remark}[Bounded exchange stiffness versus an explicit CFL condition]
\label{rem:bounded-stiffness-cfl}
Condition~\eqref{eq:parabolic-family} is a sufficient condition for the
fixed-shift GMRES bound, not a stability restriction on the implicit
discretizations. To distinguish the two, set
\(C_{\rm ex}=\widehat C_{\rm exc}/(1+\alpha^2)\). Linearizing the
exchange-only LLG equation about a constant unit state gives eigenvalues
\(-(\alpha\pm\mathrm{i})C_{\rm ex}|\bm\xi|^2\). For
\(a=kC_{\rm ex}|\bm\xi|^2\), forward Euler has squared amplification
\(1-2\alpha a+(1+\alpha^2)a^2\), and therefore requires
\begin{equation}\label{eq:explicit-exchange-cfl}
 k\widehat C_{\rm exc}\max_{\bm\xi}|\bm\xi|^2\le2\alpha .
\end{equation}
Condition~\eqref{eq:parabolic-family} only bounds the left-hand side by a
finite constant; it does not impose the smallness in
\eqref{eq:explicit-exchange-cfl}. The two conditions share an \(h^2\) scale,
but the corollary gives no uniform conclusion when \(k/h^2\to\infty\). The
full saddle matrix also has a zero multiplier diagonal block and nonzero
constraint couplings, so it is not diagonally dominant.
\end{remark}

\begin{remark}[Failure of uniform contraction along diverging
exchange-stiffness families]
\label{rem:hss-scaling}
Condition~\eqref{eq:parabolic-family} is substantive, rather than a proof
artifact. Consider a constant unit magnetization. Then the tangent basis can
be chosen spatially constant, \(H_T\) and \(S_T\) commute, and the stationary
iteration matrix is normal with
\[
 \rho(T_{\lambda,T})=
 \max_{\mu\in\sigma(H_T)}
 \left|\frac{\lambda-\mu}{\lambda+\mu}\right|.
\]
Both the zero and largest Fourier modes belong to \(\sigma(H_T)\), and
\(\max_{\bm\xi}|\bm\xi|^2\asymp h^{-2}\). With
\(\beta_h=\alpha+\theta k\widehat C_{\rm exc}
\max_{\bm\xi}|\bm\xi|^2\), minimization over \(\lambda>0\) gives
\[
 \min_{\lambda>0}\rho(T_{\lambda,T})
 =\frac{\sqrt{\beta_h/\alpha}-1}
         {\sqrt{\beta_h/\alpha}+1}\longrightarrow1
 \qquad \left(h\to0,\ \frac{k}{h^2}\to\infty\right).
\]
Hence no scalar shift, even if chosen optimally for each \((k,h)\), yields a
contraction factor uniformly below one along such a family. This includes
fixed-\(k\) refinement and the intermediate scaling \(k\asymp h\).
Corollary~\ref{cor:kh-uniform} gives the strongest uniform conclusion
supported by the present splitting: independence of the particular \(k\)
and \(h\) over any bounded-stiffness family. This obstruction does not
exclude faster right-hand-side-dependent GMRES behavior for smooth data;
Section~\ref{sec:random-rhs} examines that distinction directly.
\end{remark}

The actual endpoints of \(H_T\) depend on the nodal tangent basis, so
\(\alpha\) and \(\beta_N\) are bounds rather than asserted eigenvalues.
Moreover, \(\lambda_{\rm bd}\) minimizes only the stationary reduced-space
enclosure. The parameter minimizing GMRES time for the full saddle
preconditioner can differ because it also reflects the multiplier variables,
Krylov acceleration, and implementation cost.

We next quantify temporal and algebraic errors. Let
\(F_N(\m,t)\) be the finite-dimensional tangent vector field obtained after
eliminating the multiplier, and let
\(\Phi^{\rm E}_{k,n}\) and \(\Phi^{\rm M}_{k,n}\) denote the exact one-step
maps of the projected Euler and converged midpoint schemes, respectively.

\begin{lemma}[Local consistency at fixed spatial resolution]
\label{lem:local-consistency}
Assume \(F_N\) is twice continuously differentiable in a neighborhood of a
solution \(\m_N\in C^3([0,T];V_N^3)\), the Euler velocity equation is
locally well posed for sufficiently small \(k\), and the midpoint equation
has a locally unique solution. Then
\begin{align}
 \|\m_N(t_{n+1})-\Phi^{\rm E}_{k,n}(\m_N(t_n))\|_N
 &\le C_{\rm loc}k^2,\label{eq:euler-local}\\
 \|\m_N(t_{n+1})-\Phi^{\rm M}_{k,n}(\m_N(t_n))\|_N
 &\le C_{\rm loc}k^3.\label{eq:midpoint-local}
\end{align}
\end{lemma}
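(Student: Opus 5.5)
We compare each one-step map with the Taylor expansion of the semidiscrete solution \(\m_N\), where \(\m_N\) satisfies \(\partial_t\m_N=F_N(\m_N,t)\), with \(\m_N(t)\cdot\partial_t\m_N(t)=0\) and \(|\m_N|=1\) at every node. The first step is to write both schemes as perturbed versions of this ODE. For Euler, the velocity \(\bv\) solves the reduced saddle problem at \(\m^n=\m_N(t_n)\). This problem differs from the continuous tangent equation only by the implicit exchange term \(k\widehat C_{\rm exc}(-\Delta_N)\bv\). Local well-posedness, with a bound on the inverse uniform in small \(k\) (Proposition~\ref{prop:saddle-wellposed} together with a Neumann/implicit-function argument at fixed \(N\)), then gives \(\bv=F_N(\m^n,t_n)+O(k)\).

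\emph{Euler.} Next we control the projection. Since \(\m^n\cdot\bv=0\) nodally and \(|\m^n|=1\), we have \(|\m^n+k\bv|^2=1+k^2|\bv|^2\), so normalization changes \(\m^n+k\bv\) by \(O(k^2)\). This gives \(\Phi^{\rm E}_{k,n}(\m^n)=\m^n+kF_N(\m^n,t_n)+O(k^2)\). Taylor expansion gives \(\m_N(t_{n+1})=\m^n+k\partial_t\m_N(t_n)+O(k^2)\), with the constant controlled by \(\|\m_N\|_{C^2}\). Subtracting yields \eqref{eq:euler-local}. Because \(N\) is fixed, all norms are equivalent and \(\|\Delta_N\|\) is finite; this is why \(C_{\rm loc}\) may depend on \(N\).

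\emph{Midpoint.} For the midpoint scheme we use the standard consistency argument for implicit midpoint via the residual of the exact solution. Define \(\bv_*=(\m_N(t_{n+1})-\m_N(t_n))/k\) and \(\m_*=(\m_N(t_{n+1})+\m_N(t_n))/2\). Expanding about \(t_{n+1/2}\) with \(C^3\) regularity gives
\[
\bv_*=\partial_t\m_N(t_{n+1/2})+O(k^2),\qquad \m_*=\m_N(t_{n+1/2})+O(k^2).
\]
Substituting \((\bv_*,r_*)\) into \eqref{C-N} then leaves the following residuals.
\begin{itemize}
\item The primal residual is \(O(k^2)\). The exact equation holds at \(t_{n+1/2}\), and the \(C^2\) smoothness of \(F_N\), equivalently of \(\h_{\rm low}\) and the cross product, converts the \(O(k^2)\) midpoint perturbations into \(O(k^2)\) residuals.
\item The constraint residual is \(\m_*\cdot\bv_*=(|\m_N(t_{n+1})|^2-|\m_N(t_n)|^2)/(2k)=0\) exactly.
\end{itemize}
Here \(r_*\) is the multiplier of the exact flow at \(t_{n+1/2}\).

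Finally, we use local uniqueness together with a uniformly bounded inverse of the linearized midpoint operator near \((\bv_*,r_*)\). The linearization is a perturbation, by \(O(k)\) terms from the \(\m^{n+1/2}\)-dependence, of a nonsingular saddle matrix (Proposition~\ref{prop:saddle-wellposed} at unit nodes). This converts the \(O(k^2)\) residual into \(\|\bv^{n+1/2}-\bv_*\|=O(k^2)\). Multiplying by \(k\) gives \eqref{eq:midpoint-local}.

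\emph{Main obstacle.} The hard step is this stability estimate: proving the inverse of the linearized midpoint saddle operator is bounded uniformly as \(k\to0\). Otherwise the residual-to-error transfer loses powers of \(k\). I would first use the Banach perturbation lemma at fixed \(N\), treating the \(k\)-dependent terms \(\tfrac k2\widehat C_{\rm exc}\Delta_N\) and the derivative of the frozen coefficients as small perturbations of the \(k=0\) saddle matrix.
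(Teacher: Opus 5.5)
Your proposal is correct. The paper gives no written proof: it frames both maps through the eliminated field \(F_N\) and cites the standard local orders of a projected Euler step and the implicit midpoint rule. Your Euler argument is exactly that standard argument. The paper's runs sample the forcing at \(t_{n+1}\) rather than \(t_n\), but this only perturbs the \(k=0\) velocity by \(O(k)\) and changes nothing.

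For the midpoint step your route is genuinely different and heavier. In the paper's framing, let \(F_N(\bw,t)\) be the velocity returned by the saddle solve with frozen midpoint \(\bw\). Then the converged step is literally \(\m^{n+1}=\m^n+kF_N(\tfrac12(\m^n+\m^{n+1}),t_{n+1/2})\), that is, the implicit midpoint rule. The exact solution has defect \(d=O(k^3)\) by Taylor expansion, and the local error \(e\) satisfies \(\|e\|_N\le\|d\|_N+\tfrac{kL}{2}\|e\|_N\), with \(L\) a local Lipschitz constant of \(F_N\). That route uses only the lemma's hypothesis on \(F_N\) and needs no inverse bound for a linearized saddle operator. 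The step you flag as the main obstacle is therefore absorbed into the smoothness assumption on \(F_N\).

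Your saddle-level argument uses the exactly vanishing constraint residual, the \(O(k^2)\) primal residual, and a Jacobian within \(O(k)\) of the nonsingular \(k=0\) saddle matrix of Proposition~\ref{prop:saddle-wellposed}. It instead verifies at fixed \(N\) what that hypothesis presupposes, so it is more self-contained. It has two costs. First, it relies on smoothness of \(\h_{\rm low}\) itself. That holds for the affine fields here, but it is not equivalent to smoothness of \(F_N\), which sees only the tangential part of the field. Second, your last step should be stated as a quantitative inverse-function or contraction argument on a ball of \(k\)-independent radius around \((\bv_*,r_*)\). That argument produces a solution within \(O(k^2)\) of \((\bv_*,r_*)\), and local uniqueness then identifies it with \(\Phi^{\rm M}_{k,n}(\m_N(t_n))\).
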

These are the standard local orders of a smooth projected Euler step and the
implicit midpoint rule; see \cite{A:DCDS-S:2008,PRS:CMA:2018,HNW:SODE1:1993}.
We therefore record the assumptions and conclusions but do not repeat the
Taylor-expansion proof.

\begin{theorem}[Inexact-step global error]
\label{thm:time-order}
Choose \(X\in\{\mathrm E,\mathrm M\}\), with \(p=1\) for Euler and \(p=2\)
for midpoint. Assume the local defect is bounded by
\(C_{\rm loc}k^{p+1}\) and the exact one-step map is stable,
\begin{equation}\label{eq:onestep-stability}
 \|\Phi^X_{k,n}(\bu)-\Phi^X_{k,n}(\bv)\|_N
 \le(1+Lk)\|\bu-\bv\|_N .
\end{equation}
Let the computed map \(\widetilde\Phi^X_{k,n}\) satisfy
\begin{equation}\label{eq:step-perturbation}
 \|\widetilde\Phi^X_{k,n}(\bu)-\Phi^X_{k,n}(\bu)\|_N
 \le\eta_n .
\end{equation}
Then, for \(t_n=nk\le T\),
\begin{equation}\label{eq:inexact-global}
\begin{aligned}
\max_{0\le n\le T/k}\|\m_N(t_n)-\widetilde\m_N^n\|_N
&\le e^{LT}\biggl(
\|\m_N(0)-\widetilde\m_N^0\|_N+C_{\rm loc}Tk^p\\
&\hspace{2.4cm}+\sum_{j=0}^{T/k-1}\eta_j
\biggr).
\end{aligned}
\end{equation}
In particular, if \(\eta_n\le\eta\), the algebraic contribution is bounded
by \(e^{LT}T\eta/k\), and the order \(p\) is retained when
\(\eta=O(k^{p+1})\).
\end{theorem}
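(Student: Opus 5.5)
The plan is the standard Lady Windermere's fan argument: a telescoping error recursion that combines the local defect, the one-step stability \eqref{eq:onestep-stability}, and the perturbation bound \eqref{eq:step-perturbation}. Write \(e_n=\|\m_N(t_n)-\widetilde\m_N^n\|_N\) with \(\widetilde\m_N^{n+1}=\widetilde\Phi^X_{k,n}(\widetilde\m_N^n)\).

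First, I will split the one-step error into three pieces by inserting \(\Phi^X_{k,n}(\m_N(t_n))\) and \(\Phi^X_{k,n}(\widetilde\m_N^n)\):
\[
\m_N(t_{n+1})-\widetilde\m_N^{n+1}
=\bigl[\m_N(t_{n+1})-\Phi^X_{k,n}(\m_N(t_n))\bigr]
+\bigl[\Phi^X_{k,n}(\m_N(t_n))-\Phi^X_{k,n}(\widetilde\m_N^n)\bigr]
+\bigl[\Phi^X_{k,n}(\widetilde\m_N^n)-\widetilde\Phi^X_{k,n}(\widetilde\m_N^n)\bigr].
\]
The first bracket is bounded by the local defect \(C_{\rm loc}k^{p+1}\), with the orders supplied by Lemma~\ref{lem:local-consistency}. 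The second is bounded by \((1+Lk)e_n\) through \eqref{eq:onestep-stability}. The third is bounded by \(\eta_n\) through \eqref{eq:step-perturbation}, applied at \(\bu=\widetilde\m_N^n\). This gives the recursion
\[
e_{n+1}\le(1+Lk)e_n+C_{\rm loc}k^{p+1}+\eta_n .
\]

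Second, I will unroll the recursion by a discrete Gronwall argument:
\[
e_n\le(1+Lk)^ne_0+\sum_{j=0}^{n-1}(1+Lk)^{n-1-j}\bigl(C_{\rm loc}k^{p+1}+\eta_j\bigr).
\]
Since \((1+Lk)^m\le e^{Lmk}\le e^{LT}\) for \(mk\le T\), and there are at most \(T/k\) local defect terms, their sum is at most \(e^{LT}\,(T/k)\,C_{\rm loc}k^{p+1}=e^{LT}C_{\rm loc}Tk^p\). The algebraic terms contribute at most \(e^{LT}\sum_j\eta_j\), which yields \eqref{eq:inexact-global} after taking the maximum over \(n\). For the final claims, if \(\eta_j\le\eta\) then \(\sum_j\eta_j\le T\eta/k\), and \(\eta=O(k^{p+1})\) makes this \(O(k^p)\).

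The only delicate point is that the stability and perturbation hypotheses must apply at the computed iterates \(\widetilde\m_N^n\), which may leave the neighborhood where \(\Phi^X\) is defined. I will read \eqref{eq:onestep-stability} and \eqref{eq:step-perturbation} as holding for all arguments in a fixed neighborhood of the exact trajectory. If they are only local, I would add a short induction: as long as the right-hand side of \eqref{eq:inexact-global} stays below the neighborhood radius, every \(\widetilde\m_N^n\) remains in that neighborhood. Otherwise the argument is routine bookkeeping.
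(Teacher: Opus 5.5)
Your proposal is correct and matches the paper's argument: the paper also adds and subtracts the exact one-step map to get \(e_{n+1}\le(1+Lk)e_n+C_{\rm loc}k^{p+1}+\eta_n\), then applies discrete Gronwall with \((1+Lk)^m\le e^{LT}\) to obtain \eqref{eq:inexact-global}. Your remark that the stability and perturbation hypotheses must hold at the computed iterates, or be localized by an induction, is a sensible addition that the paper leaves implicit.
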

This is the standard stability--consistency argument: adding and subtracting
the exact one-step map gives
\(e_{n+1}\le(1+Lk)e_n+C_{\rm loc}k^{p+1}+\eta_n\), and discrete Gronwall
gives \eqref{eq:inexact-global}; see
\cite{HNW:SODE1:1993,Thomee2006a}.

For completeness, if a sufficiently regular periodic solution also satisfies
the semidiscrete estimate
\begin{equation}\label{eq:semidiscrete-assumption}
 \sup_{0\le t\le T}\|\m(t)-\m_N(t)\|_{H^r}
 \le C_{\rm sp}N^{\,r-s},\qquad s>r,
\end{equation}
and the preceding temporal bounds hold uniformly in \(H^r\), the triangle
inequality yields the conditional fully discrete estimate
\begin{equation}\label{eq:full-error}
 \max_n\|\m(t_n)-\widetilde\m_N^n\|_{H^r}
 \le C_T\left(
 N^{\,r-s}+k^p+\sum_{j=0}^{T/k-1}\eta_j\right),
 \qquad p=1\ \text{or}\ 2.
\end{equation}
If \(\eta_j\le\eta\), the last term may be replaced by \(T\eta/k\).
For analytic periodic solutions, \eqref{eq:semidiscrete-assumption} may be
replaced by the corresponding exponential Fourier convergence assumption.

\begin{remark}[How solver tolerances enter the estimate]
\label{rem:solver-tolerance}
The quantity \(\eta_n\) is a perturbation of the magnetization after one
complete time step, not a raw GMRES relative residual. A velocity error
\(\tau_{v,n}\) gives \(\eta_n\le Ck\tau_{v,n}\), so
\(\tau_{v,n}=O(k^p)\) preserves order \(p\). Converting a residual tolerance
to \(\tau_{v,n}\) still requires an inverse bound for the saddle operator.
\end{remark}

\begin{remark}[Boundary of the linear convergence result]
\label{rem:full-gmres-boundary}
The Hermitian part of the full saddle matrix is
\(\operatorname{diag}(H,0)\), so the positive-definite HSS theorem cannot be
applied to that matrix in one step. Theorem~\ref{thm:full-saddle-gmres}
instead exploits transfer from the multiplier nullspace to the damped velocity
block over two steps. It is an unrestarted left-preconditioned residual bound,
not a Euclidean field-of-values theorem.
Corollary~\ref{cor:kh-uniform} gives a uniform iteration count for bounded
exchange stiffness \(k/h^2\le C_0\), uniform nonzero nodal bounds, and a fixed
positive \(\lambda\). Remark~\ref{rem:bounded-stiffness-cfl} distinguishes this
solver-complexity condition from an explicit stability threshold. The
random-right-hand-side experiment below shows that mesh-independent iteration counts do not hold
along the tested fixed-\(k\) path, one representative family with
\(k/h^2\to\infty\), although they are observed for the smooth right-hand sides
in the preceding one-step test.
\end{remark}

\section{Numerical experiments}\label{sec4}

\subsection{Numerical setup}\label{sec:numerical-setup}

The Python programs supplied with the manuscript generate all numerical
results and figures. The computations used Python 3.13.5, NumPy 2.1.3,
SciPy 1.15.3, and the serial \texttt{scipy.fft} pocketfft implementation on
an AMD Ryzen 7 9800X3D processor with 32 GB of memory under Windows 11.

Unless stated otherwise, \(\alpha=0.2\), the normalized exchange coefficient
is one, and \(\lambda=0.5\). GMRES is restarted every 50 iterations. Accuracy
tests use relative linear and Picard tolerances of \(10^{-10}\); the
preconditioner comparisons use a relative linear tolerance of \(10^{-8}\).
Each reported CPU time is the median of three solves following one preliminary
solve that initializes the FFT routines. The timings include Krylov
matrix-vector products and preconditioner applications but exclude matrix and
preconditioner setup. The matrix, right-hand side, stopping criterion, grid,
and FFT implementation are identical in all comparisons.

We compare unpreconditioned GMRES with three preconditioners. The first is the
shifted symmetric-part FFT preconditioner
\((\lambda I+\mathcal H)^{-1}\). The second is the Householder tangent-space
preconditioner. The third is the split FFT--HSS preconditioner
\((\lambda I+\mathcal S)^{-1}(\lambda I+\mathcal H)^{-1}\).
For the Householder method, a nodewise orthonormal matrix \(Q\) spans the
discrete tangent space, GMRES solves
\(Q^T(H+S)Q\by=Q^T\bm f\), and the practical preconditioner
\(Q^TH^{-1}Q\) follows \cite[Eq.~(32)]{KrausEtAl2019}. We reconstruct the
multiplier and verify that the relative residual of the full saddle-point
system is below \(10^{-8}\). The stopping criterion for the reduced system is
conservative because \(Q\) is orthonormal and
\(\|Q^T\bm f\|_2\le\|\bm f\|_2\). This comparison is implemented on the
present Fourier grid. Consequently, its timings should not be interpreted as
a reproduction of the finite-element/NGSolve implementation in
\cite{KrausEtAl2019}.

\subsection{Temporal convergence}\label{sec:accuracy}

On \(\Omega=(0,2\pi)^2\), let
\[
 \m^e(\bx,t)=
 \bigl(\sin(t+x)\cos(t+y),\,
       \cos(t+x)\cos(t+y),\,
       \sin(t+y)\bigr).
\]
This field is periodic and satisfies \(|\m^e|=1\). A smooth forcing is obtained
by substituting \(\m^e\), \(\partial_t\m^e\), and
\(\Delta\m^e\) into the tangent equation. We use a
\(64\times64\) grid, integrate to \(T=0.1\), and define the reported error as
the average of the componentwise \(L^\infty\) errors. In the projected Euler
run the manufactured forcing is sampled at the right endpoint \(t_{n+1}\);
this is the endpoint choice covered by Lemma~\ref{lem:local-consistency}.

Figure~\ref{fig:temporal} is consistent with the orders in
Lemma~\ref{lem:local-consistency} and
Theorem~\ref{thm:time-order}; they do not independently verify the theorem's
stability assumptions. From \(k=10^{-2}\) to \(1.25\times10^{-3}\), the
Euler error decreases from \(7.307\times10^{-4}\) to
\(9.141\times10^{-5}\), and the midpoint error from \(7.132\times10^{-6}\)
to \(1.114\times10^{-7}\); observed orders are within \(10^{-3}\) of one
and two. The largest relative
linear residuals over the refinement study are \(8.5\times10^{-11}\) for
Euler and \(5.6\times10^{-11}\) for midpoint, and the maximum nodal length
error of the midpoint solution is \(9.7\times10^{-12}\).

\begin{figure}[t]
\centering
\includegraphics[width=0.62\textwidth]{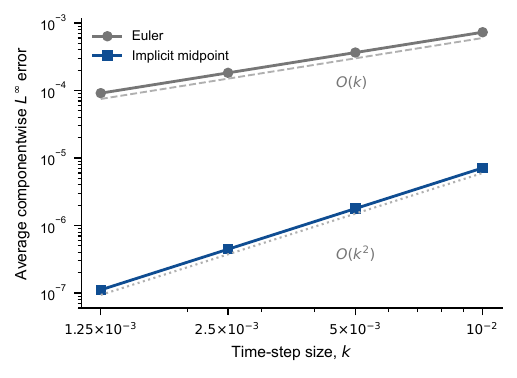}
\caption{Temporal convergence for the periodic manufactured solution.
Dashed and dotted segments show first- and second-order reference slopes.}
\label{fig:temporal}
\end{figure}

\subsection{Preconditioner comparison}\label{sec:preconditioner-results}

We next solve one identical Euler saddle system with \(k=10^{-2}\) and a
smooth manufactured-solution right-hand side on each grid. In two dimensions
\(N=32,64,128,256\); in three dimensions
\(N=16,24,32,48\), with \(N\) points per coordinate. The system has
\(4N^d\) unknowns. Figure~\ref{fig:benchmark} shows the iteration counts and
measured CPU times.

\begin{figure}[t]
\centering
\includegraphics[width=\textwidth]{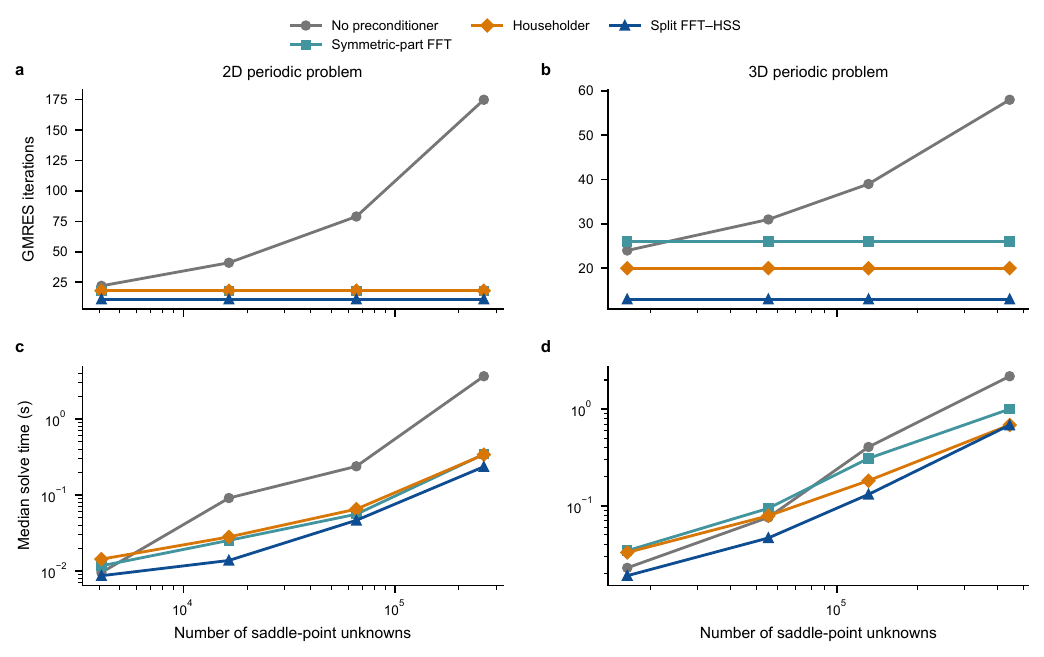}
\caption{Smooth-right-hand-side one-step GMRES comparison. (a,b) Iteration
counts in two and three dimensions. (c,d) Median solve times from three
repeated runs. The split FFT--HSS, Householder tangent-space, and
symmetric-part curves are horizontal for this right-hand side over the tested
meshes; wall-clock timings remain implementation and hardware dependent.}
\label{fig:benchmark}
\end{figure}

For this smooth right-hand side, the unpreconditioned count grows from 22 to
175 in two dimensions and from
24 to 58 in three dimensions. The symmetric-part preconditioner requires
18 and 26 iterations, and the Householder tangent-space preconditioner
requires 18 and 20. Including the local skew/constraint inverse in the full
saddle-point formulation reduces the counts to 11 and 13. On the largest tested
grids, its median times are \(0.236\) s on \(256^2\) and \(0.684\) s on
\(48^3\), giving speedups of 15.45 and 3.20 relative to no preconditioner.
At the smallest
three-dimensional grid, preconditioning overhead can outweigh the iteration
reduction; the timing benefit appears as the problem grows.

\subsection{Dependence on the shift parameter}\label{sec:lambda-results}

For the same smooth right-hand side, Fig.~\ref{fig:lambda} varies \(\lambda\)
from \(2^{-3}\) to \(2^4\) for
\(N=64,128,256\) in two dimensions. The minimum iteration count is 10 at
\(\lambda=0.25\) for all three grids; counts remain between 10 and 12 for
\(0.25\le\lambda\le1\). The iteration counts increase for larger values,
reaching
32, 39, and 42 iterations at \(\lambda=16\). The timing minima are noisier,
but the interval \(0.25\le\lambda\le1\) gives consistently small solution
times. This is a suitable range for the present computations, not
the reduced-space optimum in \eqref{eq:hss-optimal} or the enclosure
minimizer in Corollary~\ref{cor:hss-fourier}. Indeed, the computable enclosure
gives \(\lambda_{\rm bd}=2.07,4.13,8.26\) for
\(N=64,128,256\), respectively, whereas the observed GMRES minimum is
\(0.25\). The difference illustrates the conservatism of the reduced
stationary bound and shows that it does not determine the optimal parameter
for GMRES applied to the full saddle-point system.

\begin{figure}[H]
\centering
\includegraphics[width=\textwidth]{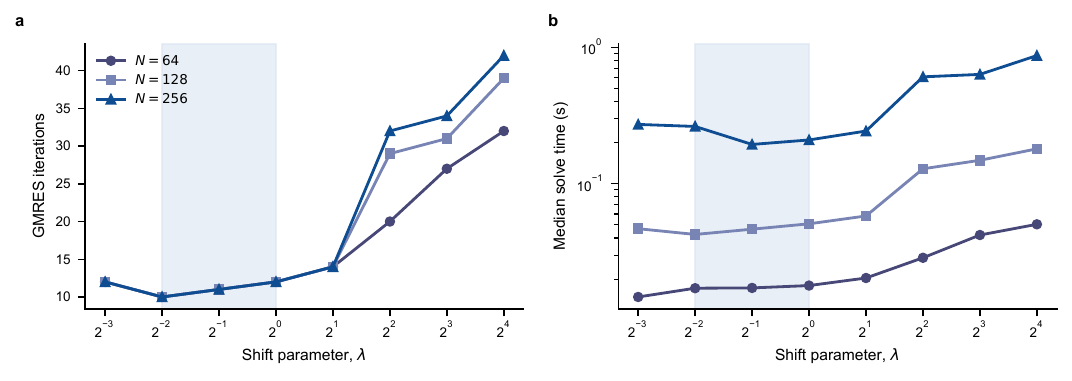}
\caption{Dependence of the split FFT--HSS preconditioner on \(\lambda\).
(a) GMRES iterations. (b) Median solve time from three runs. Shading marks
the interval \(0.25\le\lambda\le1\).}
\label{fig:lambda}
\end{figure}

\subsection{Mesh dependence for random right-hand sides}
\label{sec:random-rhs}

The preceding comparison uses a smooth manufactured right-hand side and
therefore excites only a small set of Fourier modes. To examine the effect of
right-hand-side regularity, we replace the velocity right-hand side by three
deterministic normalized Gaussian vectors, with seeds
20260724--20260726. These vectors contain components throughout the resolved
Fourier spectrum. The multiplier right-hand side is zero, and the relative
residual tolerance for the full system is \(10^{-8}\).
Figure~\ref{fig:broadband} compares the split and Householder methods under
two refinement regimes.

At fixed \(k=10^{-2}\), the median split count increases from 55 at \(N=32\)
to 608 at \(N=256\), whereas the Householder count remains between 86 and 88.
This provides a numerical counterexample to mesh-independent iteration counts
for the split method under fixed-\(k\) refinement. Under the bounded-stiffness
refinement \(kN^2=10.24\), however, the split counts remain 55--56 over the
same grids, while the Householder counts are 86--96. In this second regime the
spectral endpoint \(h_{\max}=5.5248\), the stationary factor
\(q_H=0.8340\), and the two-step full-system factor
\(\vartheta_2=0.9574\) are independent of \(N\). At fixed \(k\),
\(\vartheta_2\) instead approaches one, from 0.9574 to 0.9993.
These results are consistent with the sufficient scaling condition in
Corollary~\ref{cor:kh-uniform}. The conservative factor is not intended to
predict the observed iteration count of restarted GMRES. All reconstructed
relative residuals for the full saddle-point system are below \(10^{-8}\).

The bounded-stiffness experiment remains outside the forward-Euler exchange
range. On \((0,2\pi)^2\), \(\max_{\bm\xi}|\bm\xi|^2=N^2/2\), and hence
\(k\widehat C_{\rm exc}\max|\bm\xi|^2
=(1+\alpha^2)kN^2/2=5.3248\). This is 13.31 times the limit
\(2\alpha=0.4\) in \eqref{eq:explicit-exchange-cfl}. Moreover,
\(\sigma(H)\subset[0.2,5.5248]\), with condition number about 27.6, while the
full saddle matrix retains its zero multiplier block and constraint coupling.
Uniform counts therefore do not follow from explicit stability or diagonal
dominance.

\begin{figure}[t]
\centering
\includegraphics[width=\textwidth]{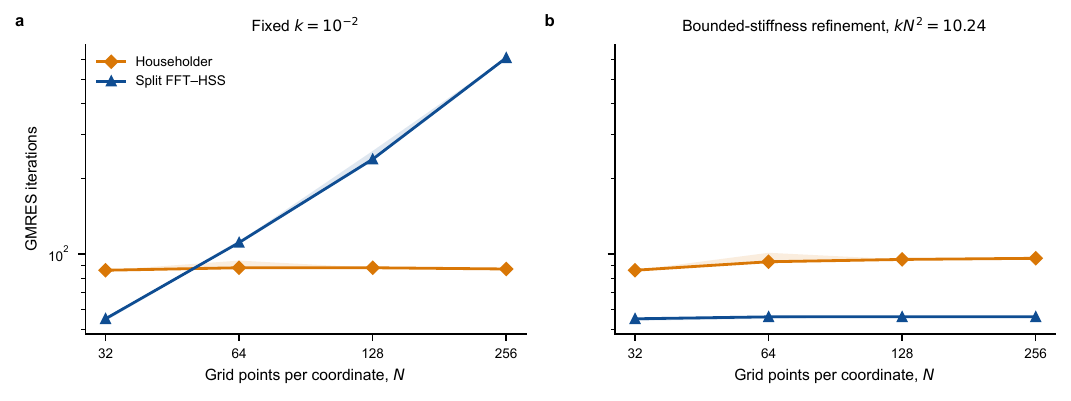}
\caption{Mesh-refinement results for random right-hand sides in two dimensions.
(a) Fixed \(k=10^{-2}\). (b) Bounded-stiffness refinement \(kN^2=10.24\).
Lines show medians over three deterministic Gaussian right-hand sides; shaded
bands show minima and maxima. The vertical axis is logarithmic and all final
relative residuals for the full saddle-point system are below \(10^{-8}\).}
\label{fig:broadband}
\end{figure}

\subsection{Discrete energy decay and length constraint}
\label{sec:energy-constraint}

Finally, we apply the projection-free midpoint scheme to a three-dimensional
periodic micromagnetic problem. The effective field includes exchange,
demagnetization, uniaxial anisotropy, and an applied field. The anisotropy
coefficient is 0.2, the easy axis is \(\bm e_3\), and
\(\h_{\rm ext}=(0.05,0,0)\). We use \(k=0.01\),
\(T=0.1\), \(\lambda=0.5\), and equal grids \(N=16,24,32\). Linear and
Picard tolerances are \(10^{-9}\). The initial condition is the smooth
three-dimensional unit field
\[
 \m^0(x,y,z)=
 \bigl(\sin(x+z)\cos(y+z),\,
       \cos(x+z)\cos(y+z),\,
       \sin(y+z)\bigr).
\]

The normalized energy decreases at every recorded step from 1.8083333333 to
1.7619940172, 1.7619940138, and 1.7619940138 for \(N=16,24,32\);
no monotonicity violation is observed. The maximum length errors are
between \(1.68\times10^{-10}\) and \(1.87\times10^{-10}\), with five Picard
iterations per time step and 7.84--8.88 GMRES iterations per linear solve.
These results assess the energy and constraint conclusions of
Proposition~\ref{prop:midpoint-energy} under inexact algebraic solves. The
largest relative linear residual is \(8.9\times10^{-10}\). To examine the
correction in Remark~\ref{rem:inexact-energy}, we also compute the residual of
the final nonlinear midpoint equation at every time step. Across the three
grids, the maximum nonlinear relative residual is \(9.4\times10^{-10}\).
The largest root-mean-square primal and constraint residuals are
\(3.5\times10^{-9}\) and \(1.5\times10^{-9}\), respectively. The residual
correction reproduces the directly measured energy-balance defect to within
\(5.4\times10^{-16}\); the largest absolute defect itself is
\(3.6\times10^{-11}\). Open boundary conditions and device geometries are not
considered here.

\begin{figure}[H]
\centering
\includegraphics[width=0.90\textwidth]{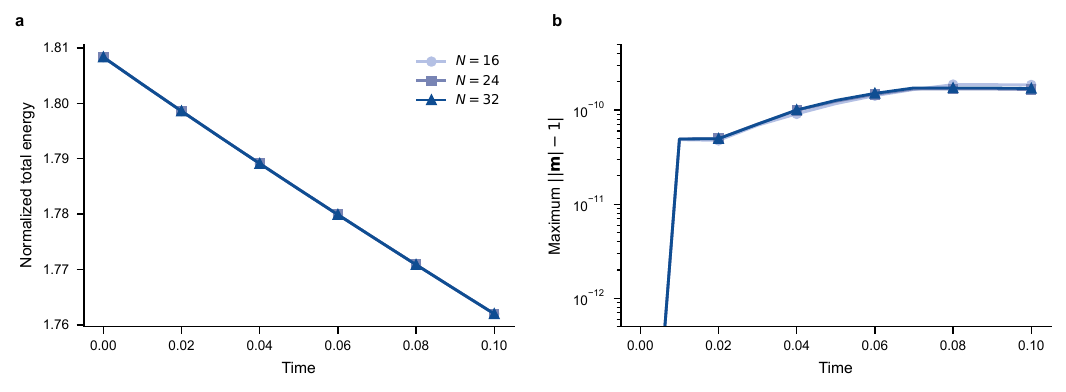}
\caption{Periodic micromagnetic computation with the midpoint scheme:
(a) normalized total energy and (b) maximum nodal unit-length error.}
\label{fig:full-field}
\end{figure}

\section{Conclusions}\label{sec5}

This work developed a matrix-free split FFT--HSS preconditioner for
tangent-plane LLG saddle systems on periodic Cartesian grids. Its main
algorithmic contribution is to combine FFT inversion of the shifted exchange
block with a closed-form \(4\times4\) nodal inverse of the coupled
skew/constraint block. The resulting application requires no global matrix
assembly or local factorization and costs \(O(N_g\log N_g)\) work and
\(O(N_g)\) temporary storage. We proved well-posedness and an even-step GMRES
estimate yielding a uniform iteration bound for bounded-stiffness refinements,
while constant-state analysis identifies the boundary of this uniformity. The
same solver supports projected implicit Euler and projection-free midpoint
stepping; for quadratic-affine energies, the latter preserves nodal length
and satisfies an exact dissipation identity.

Experiments support these claims. On the largest smooth grids, split FFT--HSS
requires \(35\)--\(39\%\) fewer GMRES iterations than same-grid Householder
preconditioning and substantially reduces solve time relative to
unpreconditioned GMRES. Broadband tests retain mesh-independent iterations in
the covered regime and expose deterioration under fixed-step refinement; the
periodic full-field test confirms energy decay and nodal-length preservation.
Together, these results show the benefit of treating the global periodic
operator and local tangent constraint in one preconditioner. Future work will
extend the method to open or irregular geometries and parallel implementations
and develop convergence estimates for restarted GMRES.

\begin{acknowledgements}
Supported by the National Natural Science Foundation of China (12271409),
the Foundation of National Key Laboratory of Computational Physics, and the
Fundamental Research Funds for the Central Universities.
\end{acknowledgements}

\section*{Declarations}

\noindent\textbf{Data availability}\quad
Data will be made available on reasonable request.

\noindent\textbf{Conflict of interest}\quad
The authors declare that they have no conflict of interest.

\noindent\textbf{Generative AI}\quad
\mbox{OpenAI Codex} was used to assist with code refactoring, figure scripting,
and language revision. The authors reviewed and verified all outputs and take
full responsibility for the content of the manuscript.

\bibliographystyle{spmpsci}
\bibliography{references}

\end{document}